\newtheorem{remark}{Remark}
\newtheorem{theorem}{Theorem}
\newcommand{\R}{\mathbb{R}}
\newcommand{\Rdz}{\mathbb{R}^{d_{\mathbf{z}}}}
\newcommand{\Ht}{\mathcal{H}}
\newcommand{\z}{\mathbf{z}}
\renewcommand{\epsilon}{\varepsilon}
\begin{document}
\begin{acronym}
\acro{bkw}[BKW]{Bobylev-Krook-Wu}
\acro{gpc}[gPC]{generalised polynomial expansion}
\acro{sg}[sG]{stochastic Galerkin}
\end{acronym}

\title{\textbf{Uncertainty Quantification for the Homogeneous Landau-Fokker-Planck Equation via Deterministic Particle Galerkin methods}}

\author[1]{Rafael Bailo\thanks{\tt bailo@maths.ox.ac.uk}}
\author[1]{José A. Carrillo\thanks{\tt carrillo@maths.ox.ac.uk}}
\author[2]{Andrea Medaglia\thanks{\tt andrea.medaglia02@universitadipavia.it }}
\author[2]{Mattia Zanella\thanks{\tt mattia.zanella@unipv.it}}
\affil[1]{Mathematical Institute, University of Oxford, United Kingdom}
\affil[2]{Department of Mathematics ``F. Casorati'', University of Pavia, Italy}

\date{}

\maketitle

\abstract{
We design a deterministic particle method for the solution of the spatially homogeneous Landau equation with uncertainty. The deterministic particle approximation is based on the reformulation of the Landau equation as a formal gradient flow on the set of probability measures, whereas the propagation of uncertain quantities is computed by means of a \acf{sg} representation of each particle. This approach guarantees spectral accuracy in uncertainty space while preserving the fundamental structural properties of the model: the positivity of the solution, the conservation of invariant quantities, and the entropy production. We provide a regularity results for the particle method in the random space. We perform the numerical validation of the particle method in a wealth of test cases.
}
\\[+.2cm]
{\bf Keywords}: plasma physics, Landau-Fokker-Planck equation, deterministic particle methods, uncertainty quantification, stochastic Galerkin methods. 

\section{Introduction}

This work introduces a deterministic particle scheme that is able to simultaneously preserve the physical properties of the spatially homogeneous Landau equation and quantify the evolution of uncertainty due to uncertain model parameters and initial conditions. The Landau equation is one of the most physically relevant models to describe plasma, which is itself a key ingredient in the development of nuclear fusion and the search for new energy sources. As such, the development of robust numerical tools in this area is of paramount importance.

In the absence of uncertainties, the evolution of the distribution function of charged particles in a plasma, $f$, in the phase space $(x,v)\in\R^d\times\R^d$, at time $t>0$, is governed by the Landau-Fokker-Planck equation:
\begin{equation}\label{eq:LFP}
\partial_t f(x,v,t) + v \cdot \nabla_x f(x,v,t) + F(x,v,t) \cdot \nabla_v f(x,v,t) = Q(f,f)(x,v,t)
\end{equation}
where $d\geq2$ is the dimension, and $F(\cdot)$ is a force acting on the particles, which may be either external or self-consistent. The right hand side $Q(f,f)$ is the Landau collision operator, a non-local term that describes the localised Coulomb interactions between charged particles. The operator is given by
\begin{equation}\label{eq:Coll_Op}
Q(f,f)(x,v,t) \coloneqq \nabla_v \cdot \int_{\mathbb R^d} A(v-v_*)\left[ f(v_*) \nabla_v f(v) - f(v) \nabla_{v_*}f(v_*) \right] \,\mathrm{d}v_*,
\end{equation}
where $A(\cdot)$ is a $d\times d$ symmetric and positive-semi-definite matrix that encodes the so-called \textit{collisional cross-section}:
\begin{equation}
A(q) = C |q|^{\gamma+2} S(q), \quad S(q) = I - \frac{q \otimes q}{|q|^2},
\end{equation}
where $C>0$ is the collision strength, $I$ is the $d\times d$ identity matrix, and $S(q)$ is the projection matrix onto the perpendicular of $q$.

The exponent $-d-1\leq\gamma\leq1$ governs the type of interaction: $\gamma>0$ corresponds to so-called \textit{hard potentials}, $\gamma<0$ to \textit{soft potentials}, and $\gamma=0$ is the Maxwell case. The Coulomb case, $\gamma=-d$, plays a crucial role in plasma physics, as it captures the Coulomb interactions between charged particles, and is deemed the most physically relevant choice.

This work focusses on the space-homogeneous Landau equation,
\begin{equation}\label{eq:L}
\partial_t f(v,t) = Q(f,f)(v,t)
\end{equation}
for $v\in\R^d$ and $t>0$; we briefly recall its properties here. The collision operator \eqref{eq:Coll_Op} can be rewritten (for sufficiently smooth solutions) as 
\begin{equation}
Q(f,f)(v,t) = \nabla_v \cdot \int_{\mathbb R^d} A(v-v_*)f(v)f(v_*)\left[ \nabla_v \log f(v) - \nabla_{v_*}\log f(v_*)\right]\,\mathrm{d}v_*.
\end{equation} 
The weak formulation of the equation can be expressed, after symmetrisation, as
\begin{equation}
\frac{\mathrm{d}}{\mathrm{d}t} \int_{\mathbb R^d} \phi f \,\mathrm{d}v
=
- \frac{1}{2} \iint_{\mathbb R^d \times \mathbb R^d}\!\!\!\!\!\!\!\!\!\!\! \left( \nabla_v\phi - \nabla_{v_*} \phi_* \right) \cdot A(v-v_*)f f_*\left[ \nabla_v \log f - \nabla_{v_*}\log f_*\right]\,\mathrm{d}v \,\mathrm{d}v_*,
\end{equation}
for any test function $\phi=\phi(v)$, with the notation $\phi_*=\phi(v_*)$, $f=f(v)$, and $f_*=f(v_*)$.

The conservation properties of the equation become apparent by choosing $\phi=1,v,|v|^2$. Conservation of mass ($\phi=1$) and momentum ($\phi=v$) are immediate, and conservation of energy ($\phi=|v|^2$) follows by observing the fact that, in that case, the matrix $A$ projects the vector $(\nabla_v \log f - \nabla_{v_*}\log f_*)$ onto the perpendicular of $(\nabla_v\phi - \nabla_{v_*} \phi_*)$. Therefore:
\begin{equation}
\frac{\mathrm{d}}{\mathrm{d}t} \int_{\mathbb R^d} \begin{pmatrix} 1 \\ v \\ |v|^2 \end{pmatrix} f \,\mathrm{d}v = 0.
\end{equation}

The dissipation properties of the equation are best understood by first defining the entropy functional
\begin{equation}
\Ht(f)(t) = \int_{\mathbb R^d} f(v,t) \log \left(f(v,t)\right) \,\mathrm{d}v.
\end{equation} 
A formal calculation shows the dissipation of the entropy,
\begin{equation}
\frac{\mathrm{d}}{\mathrm{d}t} \int_{\mathbb R^d} f \log f \,\mathrm{d}v \coloneqq - \mathcal{D}(f)(t) = - \frac{1}{2} \iint_{\mathbb R^d \times \mathbb R^d} b_{v,v_*} \cdot A(v-v_*) b_{v,v_*} f f_* \,\mathrm{d}v \,\mathrm{d}v_*\leq 0,
\end{equation}
where $b_{v,v_*}\coloneqq\nabla_v \log f - \nabla_{v_*} \log f_*$, and where the non-positivity follows from the positive-semi-definite property of the matrix $A$. This dissipation property can be used to characterise the kernel of $Q$, which is comprised exclusively of Maxwellians \cite{GZ2017}: distributions of the form
\begin{equation}
\mathcal{M}_{\rho,U,T} = \rho \left( \dfrac{1}{2\pi T } \right)^{\frac{d}{2}} \exp\left( - \dfrac{ (v - U)^2}{2 T}\right),
\end{equation}
for constants $\rho$ (the mass) and $T$ (the temperature), and constant vector $U$ (the mean velocity).

The mathematical study of both the homogeneous Landau equation and the Landau-Fokker-Planck is a challenging and active area of research. Many problems remain open, particularly in the case of soft potentials, which is the most physically relevant. Numerically, the challenges arise from the discretisation of the collision operator. Not only is the problem high-dimensional, leading to costly computations; it is also non-trivial to discretise $Q$ in a way that preserves the rich physical structure of the equations (the positivity of the solution, the conservation of invariant quantities, and the dissipation of the entropy functional).

Numerical methods for kinetic plasma equations can generally be classified into two categories. The first, methods based on a direct discretisation of the PDEs, like finite-differences and finite-volume methods \cite{Dimarco2015, Crouseilles2004, Filbet2001, xiao2021stochastic}, semi-Lagrangian schemes \cite{sonnendrucker1999semi, crouseilles2010}, and Fourier spectral methods \cite{Pareschi2000, gamba2017fast}. The second, methods based on a particle approximation of the distribution function, and consequently of the underlying dynamics, like particle-in-cell (PIC) methods \cite{birdsall85, pinto2014charge}, direct simulation Monte Carlo (DSMC) methods \cite{bobylev2000,RRCD,Dimarco2008,Dimarco2010}, or deterministic particle methods \cite{Jingwei2020, Carrillo2022}. 

The study, quantification, and control of the propagation of uncertainty in kinetic equations is a topic of undeniable relevance \cite{Pareschi2021,Jin2017,liu2018hypocoercivity}. This understanding is crucial for real-world applications, as knowledge about models is often incomplete. Uncertainty, whether it be in model parameters, initial conditions, or boundary conditions, can drastically affect model predictions, and their sensitivity must be quantified. 

Of course, the inclusion of random parameters poses, in turn, additional challenges in the theoretical and numerical analysis of our problem. From an analytical point of view, it is crucial to study the sensitivity of the solution with respect to the uncertainties in order to guarantee the regularity of the model, which will be of great importance in the design of numerical methods. As for the latter, they must also handle an increased dimensionality in the problem, which could prove computationally expensive.

In this work, we propose a hybrid \acf{sg} particle method for the numerical solution of the spatially homogeneous Landau equation with uncertainty. The scheme is based on a particle approximation of the equation in phase space, and a subsequent \ac{gpc} of the particles in the random parameter space. In this way, provided sufficient regularity, the method has spectral accuracy in uncertain space and preserves the structural properties of the model (the conservation of invariant quantities and the dissipation of the entropy), which would typically br lost with a standard \ac{sg} polynomial approximation scheme.

The particle approximation in phase space was first described in \cite{Jingwei2020}. The authors propose a new variational formulation that characterises the Landau equation as a gradient flow; the flow is formally a continuity equation with a velocity field defined by a regularisation of the entropy functional. In this way, the solution can be approximated by an empirical measure, giving rise to a coupled system of ordinary differential equations for the particles. It was shown that the particle method preserves the positivity, mass, momentum, and energy of the solution, as well as the dissipation of the entropy functional. For a formal discussion on the properties of the gradient flow reformulation, see also \cite{carrillo2022boltzmann, carrillo2023convergence}.

Once uncertainty is incorporated into the method, the particles are then approximated by their \ac{gpc} expansion in the space of the random parameters, thus obtaining a fully coupled system for the coefficients of this expansion. This approach was recently proposed for particle methods with applications to the homogeneous Boltzmann \cite{Pareschi2020} and Landau \cite{medaglia2023} equations, as well as to a Vlasov-Poisson-BGK system \cite{medaglia2022JCP}. Previously, the technique was used in multi-agent systems \cite{Carrillo19,CarrilloPZ19,Medaglia2022}.

The rest of the work is organised as follows. In Section \ref{sec:det} we introduce the deterministic particle method in the absence of uncertainties, recalling the relevant properties of the scheme. In Section \ref{sec:unc} we extend the method to the Landau equation with uncertainty, and we investigate the regularity of the scheme in the space of the random parameters. In Section \ref{sec:num} we validate the \ac{sg} particle method through several numerical tests; we verify the spectral accuracy of the scheme in the uncertain space, its agreement with the \ac{bkw} solution and Trubnikov's formula, and its trend to equilibrium.

\section{Deterministic particle method} \label{sec:det}

We begin by describing the deterministic particle method, which is based on the interpretation of the Landau equation as a gradient flow, and its regularisation.

\subsection{The Landau equation as a gradient flow}

The Landau equation can be interpreted as a (formal) gradient flow on the space of probability measures. This idea has been explored in recent works \cite{Jingwei2020, carrillo2022boltzmann}, which are in turned inspired by previous contributions on the non-linear Fokker-Planck \cite{ambrosio2005gradient,carrillo2003kinetic,carrillo2010nonlinear} and Boltzmann \cite{erbar2023gradient} equations.

This interpretation relies on the variation of the entropy functional (the $L^2$ representative of the Gateaux derivative of the functional) with respect to zero-mass perturbations, which is simply
\begin{equation}
\frac{\delta \Ht}{\delta f} = \log f.
\end{equation}
We may formally rewrite the collision operator \eqref{eq:Coll_Op} as
\begin{equation}\label{eq:Q_gr_fl}
Q(f,f) = \nabla_v \cdot \left( f \int_{\mathbb R^d} A(v-v_*) \left(\nabla_v\frac{\delta \Ht}{\delta f} - \nabla_{v_*}\frac{\delta \Ht_{*}}{\delta f_*}\right) f_* \,\mathrm{d}v_* \right),
\end{equation} 
leading to the Landau equation in \textit{continuity equation form},
\begin{equation}
\partial_t f(v,t) = - \nabla_v \cdot (f(v,t) U(f)(v,t)),
\end{equation}
where
\begin{align}
U(f)(v,t) &= - \int_{\mathbb R^d} A(v-v_*) b(v,v_*) f_* \,\mathrm{d}v_*,\\
b(v,v_*) &= \nabla_v\frac{\delta \Ht}{\delta f} - \nabla_{v_*}\frac{\delta \Ht_*}{\delta f_*}.
\end{align}
The corresponding weak formulation is
\begin{equation}
\int_{\mathbb R^d} \phi\, Q(f,f) \,\mathrm{d}v = 
- \frac{1}{2} \iint_{\mathbb R^{2d}} (\nabla_v \phi - \nabla_{v_*} \phi_*)
\cdot A(v-v_*)\left(\nabla_v\frac{\delta \Ht}{\delta f} - \nabla_{v_*}\frac{\delta \Ht_*}{\delta f_*}\right) f f_* \,\mathrm{d}v \,\mathrm{d}v_*.
\end{equation}

The advantage of this reformulation is that the entropy functional may now be regularised without altering the structural properties of the operator (the conservation of invariant quantities and the entropy dissipation). This feature is what permits the development of structure-preserving deterministic particle methods. 

\subsection{Two regularised gradient flows}

In this section, we study two different regularisations of the homogeneous Landau equation, arising from a symmetric and an anti-symmetric regularisation of the entropy functional. For a formal discussion on the properties of the regularised Landau equation, we refer to \cite{Jingwei2020, carrillo2022boltzmann}. We shall use the regularisations to motivate the deterministic particle methods, as well as their extension to the case with uncertainty in the sequel.

To construct the regularisations, we follow \cite{carrillo2019blob} in considering a Gaussian mollifier of variance $\epsilon>0$, \begin{equation}
\psi_\epsilon (v) = \frac{1}{(2\pi\epsilon)^{\frac{d}{2}}}\exp\left(-\frac{|v|^2}{2\epsilon}\right)
\end{equation}
and then we propose:
\begin{enumerate}[(a)]
\item a \emph{symmetric} regularisation of the entropy functional,
\begin{equation}\label{eq:simm}
\Ht_{\epsilon}(f)(t) = \int_{\mathbb R^d} (f\ast \psi_\epsilon) \log(f\ast \psi_\epsilon) \,\mathrm{d}v;
\end{equation}
\item an \emph{anti-symmetric} regularisation,
\begin{equation}\label{eq:antisimm}
\Ht_{\epsilon}(f)(t) = \int_{\mathbb R^d} f \log(f\ast \psi_\epsilon) \,\mathrm{d}v.
\end{equation}
\end{enumerate}
Other mollifiers are investigated in \cite{carrillo2023convergence}, but we retain the Gaussian choice in this chapter for simplicity.

Employing one of the regularised entropies, we arrive at the regularised gradient flow:
\begin{equation}\label{eq:Landau_reg}
\partial_t f(t,v) = Q_\epsilon(f,f) \coloneqq - \nabla_v\cdot(fU_\epsilon(f)),
\end{equation}
where 
\begin{align}
U_\epsilon(f)(t,v) &= - \int_{\mathbb R^d} A(v-v_*) b_\epsilon(v,v_*) f_* \,\mathrm{d}v_*,\\
b_\epsilon(v,v_*) &= \nabla_v \frac{\delta \Ht_\epsilon}{\delta f} - \nabla_{v_*} \frac{\delta \Ht_{\epsilon,*}}{\delta f_*}.
\end{align}
The variation and its gradient are as follows:
\begin{enumerate}[(a)]
\item in the symmetric case \eqref{eq:simm}, we obtain
\begin{equation}
\frac{\delta\Ht_\epsilon}{\delta f} = \log(f\ast\psi_\epsilon)\ast\psi_\epsilon,
\quad \nabla_v\frac{\delta\Ht_\epsilon}{\delta f} = \log(f\ast\psi_\epsilon)\ast\nabla_v\psi_\epsilon;
\end{equation}
\item in the anti-symmetric case \eqref{eq:antisimm}, we obtain
\begin{equation}
\frac{\delta\Ht_\epsilon}{\delta f} = \log(f\ast\psi_\epsilon) + \frac{f}{f\ast\psi_\epsilon}\ast\psi_\epsilon, \quad \nabla_v\frac{\delta\Ht_\epsilon}{\delta f} = \frac{f\ast\nabla_v\psi_\epsilon}{f\ast\psi_\epsilon} + \frac{f}{f\ast\psi_\epsilon}\ast\nabla_v\psi_\epsilon.
\end{equation}
\end{enumerate}

\subsection{Deterministic particle method}
Both regularisations \eqref{eq:simm}-\eqref{eq:antisimm} of the homogeneous Landau equation will lead to particle methods with suitable properties. The methods arise by considering the regularised flow \eqref{eq:Landau_reg} as a continuity equation for an empirical solution
\begin{equation} \label{eq:emp}
f^N(v,t) = \sum_{i=1}^{N} w_i \delta(v-v_i(t)),
\end{equation}
where $w_i>0$ is the weight of the particle $i$ for every $i=1,\dots,N$. A smoothed solution (\textit{blob solution}) can be then defined as
\begin{equation}\label{eq:reg_discr}
\tilde{f}^N(v,t) \coloneqq (f^N\ast\psi_\epsilon)(v,t) = \sum_{i=1}^{N} w_i \psi_\epsilon(v-v_i(t)).
\end{equation}
Within the particle interpretation, $\psi_\epsilon$ is akin to the ``shape'' of the particles.

Substituting \eqref{eq:emp} into \eqref{eq:Landau_reg} as a distributional solution to the regularised Landau equation, we obtain the time evolution rule of the system of $N$ particles
\begin{align}
\label{eq:det_scheme}
\frac{\mathrm{d} v_i(t)}{\mathrm{d}t} = U_\epsilon(f^N)(v_i,t) &= - \sum_{j=1}^N w_j A(v_i-v_j) b^N_\epsilon(v_i,v_j) \\
& = - \sum_{j=1}^N w_j A(v_i-v_j)\left(\nabla\frac{\delta \Ht^{N}_{\epsilon}}{\delta f}(v_i) - \nabla\frac{\delta \Ht^{N}_{\epsilon}}{\delta f}(v_j) \right).
\end{align}
The variation of the gradient becomes:
\begin{enumerate}[(a)]
\item for the symmetric regularisation \eqref{eq:simm},
\begin{equation}
\nabla\frac{\delta \Ht^{N}_{\epsilon}}{\delta f}(v_i) = \int_{\mathbb R^d} \nabla\psi_\epsilon(v-v_i) \log\left( \tilde{f}^N (v,t) \right) \,\mathrm{d}v;
\end{equation}
\item for the anti-symmetric regularisation \eqref{eq:antisimm},
\begin{equation}
\nabla\frac{\delta \Ht^{N}_{\epsilon}}{\delta f}(v_i) = \frac{\nabla\tilde{f}^N(v_i)}{\tilde{f}^N(v_i)} + \sum_{k=1}^N w_k \frac{\nabla\psi_\epsilon(v_i-v_k)}{\tilde{f}^N(v_k)}.
\end{equation}
\end{enumerate}
The time dependency of $\tilde{f}^N$, $\nabla\frac{\delta \Ht^{N}_{\epsilon}}{\delta f}$, and $ b^N_\epsilon$ is omitted for simplicity.

\begin{remark}[Semi-discrete vs discrete schemes in velocity]
The symmetric regularisation leads only to a semi-discrete--in--velocity scheme, since the evaluation of the term $\nabla_v\frac{\delta\Ht_\epsilon}{\delta f}$ requires quadrature. Meanwhile, the anti-symmetric discretisation is more convenient in practice, since it leads to a fully discrete-in-velocity scheme; every convolution that appears in the method involves an empirical distribution, and thus become a discrete sum.
\end{remark}

The particle method is structure-preserving with either discretisation:

\begin{theorem}[Properties of the semi-discrete--in--velocity scheme]\label{th:prop_det_simm}
The deterministic particle method~\eqref{eq:det_scheme} with the symmetric regularisation \eqref{eq:simm} preserves the structural properties of the Landau equation: 
\begin{enumerate}
\item Conservation of mass, momentum, and energy 
\begin{equation}
\frac{\mathrm{d}}{\mathrm{d}t} \sum_{i=1}^{N} w_i = 0,
\quad
\frac{\mathrm{d}}{\mathrm{d}t} \sum_{i=1}^{N} w_i v_i(t) = 0, 
\quad
\frac{\mathrm{d}}{\mathrm{d}t} \sum_{i=1}^{N} w_i |v_i(t)|^2= 0.
\end{equation}
\item Dissipation of the semidiscrete entropy of the solution
\begin{equation}
\frac{\mathrm{d}}{\mathrm{d}t} \Ht^N_\epsilon = - \mathcal{D}^N_\epsilon \leq 0
\end{equation}
where
\begin{align}
\Ht^N_\epsilon &= \int_{\mathbb R^d} (f^N \ast \psi_\epsilon) \log\left(f^N \ast \psi_\epsilon\right) \,\mathrm{d}v \\
\mathcal{D}^N_\epsilon &= \frac{1}{2} \sum_{i,j=1}^N w_i w_j b(v_i,v_j) \cdot A(v_i-v_j) b(v_i,v_j).
\end{align}
\end{enumerate} 
\end{theorem}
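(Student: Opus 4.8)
The plan is to prove both items by differentiating the relevant quantity in time and inserting the evolution law~\eqref{eq:det_scheme}, then exploiting only the symmetry and positive-semi-definiteness of $A$, the identity $A(q)q=0$, and the antisymmetry of $b^N_\epsilon$ under the exchange $i\leftrightarrow j$. This reproduces, at the discrete level, the weak formulation and the formal entropy-dissipation identity recalled for the continuous equation.

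\emph{Conservation.} For an arbitrary smooth observable $\phi$, I would differentiate $\sum_i w_i\phi(v_i(t))$ and insert \eqref{eq:det_scheme}:
\begin{equation*}
\frac{\mathrm d}{\mathrm dt}\sum_{i=1}^N w_i\phi(v_i) = -\sum_{i,j=1}^N w_iw_j\,\nabla\phi(v_i)\cdot A(v_i-v_j)\,b^N_\epsilon(v_i,v_j).
\end{equation*}
Since $A(-q)=A(q)$ and $A$ is symmetric while $b^N_\epsilon(v_j,v_i)=-b^N_\epsilon(v_i,v_j)$, I symmetrise the double sum over $i\leftrightarrow j$ to obtain
\begin{equation*}
\frac{\mathrm d}{\mathrm dt}\sum_{i=1}^N w_i\phi(v_i) = -\frac12\sum_{i,j=1}^N w_iw_j\,\bigl(\nabla\phi(v_i)-\nabla\phi(v_j)\bigr)\cdot A(v_i-v_j)\,b^N_\epsilon(v_i,v_j).
\end{equation*}
Taking $\phi=1$ and $\phi=v$ (componentwise) makes $\nabla\phi(v_i)-\nabla\phi(v_j)$ vanish identically, giving conservation of mass and momentum. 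For $\phi=|v|^2$ one has $\nabla\phi(v_i)-\nabla\phi(v_j)=2(v_i-v_j)$, and the summand vanishes because $A(q)q=C|q|^{\gamma+2}S(q)q=0$ by the construction of $S$, together with the symmetry of $A$; hence energy is conserved.

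\emph{Entropy dissipation.} Write $\Ht^N_\epsilon=\int_{\R^d}\tilde f^N\log\tilde f^N\,\mathrm dv$ with $\tilde f^N(v,t)=\sum_i w_i\psi_\epsilon(v-v_i(t))$ as in \eqref{eq:reg_discr}. Differentiating under the integral sign — legitimate since $\tilde f^N$ is a finite, strictly positive combination of Gaussians, so $\log\tilde f^N$ grows at most quadratically and every integrand has Gaussian tails — and using $\partial_t\psi_\epsilon(v-v_i(t))=-\dot v_i\cdot(\nabla\psi_\epsilon)(v-v_i)$, one gets $\partial_t\tilde f^N=-\sum_i w_i\dot v_i\cdot(\nabla\psi_\epsilon)(v-v_i)$. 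Because $\int_{\R^d}\partial_t\tilde f^N\,\mathrm dv=\frac{\mathrm d}{\mathrm dt}\sum_i w_i=0$, the contribution of the ``$1$'' in $\frac{\mathrm d}{\mathrm dt}(\tilde f^N\log\tilde f^N)=(1+\log\tilde f^N)\partial_t\tilde f^N$ drops, so
\begin{equation*}
\frac{\mathrm d}{\mathrm dt}\Ht^N_\epsilon=-\sum_{i=1}^N w_i\,\dot v_i\cdot\int_{\R^d}\log\tilde f^N(v)\,(\nabla\psi_\epsilon)(v-v_i)\,\mathrm dv = \sum_{i=1}^N w_i\,\dot v_i\cdot\nabla\frac{\delta\Ht^N_\epsilon}{\delta f}(v_i),
\end{equation*}
where the velocity integral is recognised as the variation of the symmetric regularisation \eqref{eq:simm} evaluated at $v_i$ (using the oddness of $\nabla\psi_\epsilon$ to fix the argument and sign). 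Substituting \eqref{eq:det_scheme}, i.e.\ $\dot v_i=-\sum_j w_j A(v_i-v_j)b^N_\epsilon(v_i,v_j)$ with $b^N_\epsilon(v_i,v_j)=\nabla\frac{\delta\Ht^N_\epsilon}{\delta f}(v_i)-\nabla\frac{\delta\Ht^N_\epsilon}{\delta f}(v_j)$, and symmetrising over $i\leftrightarrow j$ exactly as above, I arrive at
\begin{equation*}
\frac{\mathrm d}{\mathrm dt}\Ht^N_\epsilon=-\frac12\sum_{i,j=1}^N w_iw_j\,b^N_\epsilon(v_i,v_j)\cdot A(v_i-v_j)\,b^N_\epsilon(v_i,v_j)=-\mathcal D^N_\epsilon\le0,
\end{equation*}
the nonpositivity following from the positive-semi-definiteness of $A$.

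\emph{Main obstacle.} The algebra is elementary once the structure is in place; the one genuinely delicate point is the middle of the entropy computation — differentiating the blob density, discarding the mass term, and identifying the pairing $\sum_i w_i\dot v_i\cdot(\text{velocity integral})$ with $\sum_i w_i\dot v_i\cdot\nabla\frac{\delta\Ht^N_\epsilon}{\delta f}(v_i)$, the discrete analogue of $\int Q_\epsilon(f,f)\frac{\delta\Ht_\epsilon}{\delta f}\,\mathrm dv$. Keeping track of which variable each gradient acts on, and the resulting signs from the oddness of $\nabla\psi_\epsilon$, is where care is required; everything else follows the template of the formal calculation for the continuous equation, and in particular the differentiation-under-the-integral step is harmless here precisely because only a fixed, finite number of Gaussian blobs is involved.
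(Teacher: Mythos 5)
Your proof is correct, and it is essentially the standard argument: the paper itself does not spell out this computation but simply defers to \cite{Jingwei2020}, and your symmetrisation of the double sum together with $A(q)q=0$ for the conservations, and the chain rule plus $i\leftrightarrow j$ symmetrisation for the entropy, is exactly the template the paper carries out explicitly for the anti-symmetric case in Theorem~\ref{th:prop_det}. The only genuinely new ingredient in the symmetric case --- differentiating the integral entropy under the integral sign and discarding the mass term --- is handled correctly and with adequate justification.
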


\begin{proof}
For the explicit computations, we refer to \cite{Jingwei2020}.
\end{proof}

\begin{theorem}[Properties of the discrete-in-velocity scheme]\label{th:prop_det}
The deterministic particle method~\eqref{eq:det_scheme} with the anti-symmetric regularisation \eqref{eq:antisimm} preserves the structural properties of the Landau equation: 
\begin{enumerate}
\item Conservation of mass, momentum, and energy 
\begin{equation}
\frac{\mathrm{d}}{\mathrm{d}t} \sum_{i=1}^{N} w_i = 0,
\quad
\frac{\mathrm{d}}{\mathrm{d}t} \sum_{i=1}^{N} w_i v_i(t) = 0, 
\quad
\frac{\mathrm{d}}{\mathrm{d}t} \sum_{i=1}^{N} w_i |v_i(t)|^2= 0.
\end{equation}
\item Dissipation of the discrete entropy of the solution
\begin{equation}
\frac{\mathrm{d}}{\mathrm{d}t} \Ht^N_\epsilon = - \mathcal{D}^N_\epsilon \leq 0
\end{equation}
where
\begin{align}
\Ht^N_\epsilon &= \sum_{i=1}^N w_i \log\left( \sum_{j=1}^N w_j \psi_\epsilon(v_i-v_j) \right) \\
\mathcal{D}^N_\epsilon &= \frac{1}{2} \sum_{i,j=1}^N w_i w_j b(v_i,v_j) \cdot A(v_i-v_j) b(v_i,v_j).
\end{align}
\end{enumerate} 
\end{theorem}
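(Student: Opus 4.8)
The plan is to mirror the proof of Theorem~\ref{th:prop_det_simm} from \cite{Jingwei2020}, replacing the semi-discrete variation by its fully discrete counterpart and checking that every cancellation survives. Throughout I would write $g_i \coloneqq \nabla\frac{\delta\Ht^N_\epsilon}{\delta f}(v_i)$ for the discrete variation in the anti-symmetric case, so that the scheme \eqref{eq:det_scheme} reads $\dot v_i = -\sum_j w_j A(v_i-v_j)(g_i-g_j)$ with $b^N_\epsilon(v_i,v_j)=g_i-g_j$. The two structural facts that drive everything are: the mollifier $\psi_\epsilon$ is even, hence $\nabla\psi_\epsilon$ is odd and $\nabla\psi_\epsilon(0)=0$; and $A(q)=C|q|^{\gamma+2}S(q)$ with $S(q)$ symmetric, positive semi-definite, even in $q$, and satisfying $S(q)q=0$.

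For the conservation laws I would differentiate each invariant directly. Mass is immediate since the $w_i$ are constant in time. For momentum, $\frac{\mathrm d}{\mathrm dt}\sum_i w_i v_i = -\sum_{i,j} w_i w_j A(v_i-v_j)(g_i-g_j)$; since $A(v_i-v_j)=A(v_j-v_i)$ while $(g_i-g_j)=-(g_j-g_i)$, the summand is antisymmetric under $i\leftrightarrow j$, so the double sum vanishes. For energy, $\frac{\mathrm d}{\mathrm dt}\sum_i w_i|v_i|^2 = -2\sum_{i,j} w_i w_j\, v_i\cdot A(v_i-v_j)(g_i-g_j)$; symmetrising the indices as above replaces $v_i$ by $\tfrac12(v_i-v_j)$, and $A(v_i-v_j)(v_i-v_j)=C|v_i-v_j|^{\gamma+2}S(v_i-v_j)(v_i-v_j)=0$ because $S(q)$ projects onto $q^\perp$. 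These are the discrete analogues of the $\phi=1,v,|v|^2$ identities recalled in the introduction, and they are insensitive to the choice of regularisation.

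The heart of the matter is the entropy estimate. With $\tilde f^N(v_i)=\sum_j w_j\psi_\epsilon(v_i-v_j)$ as in \eqref{eq:reg_discr}, the functional $\Ht^N_\epsilon=\sum_i w_i\log\tilde f^N(v_i)$ depends on each $v_k$ in two ways: as the evaluation point in the $i=k$ summand, and as a particle location through the $j=k$ term inside every $\tilde f^N(v_i)$. Computing $\frac{\mathrm d}{\mathrm dt}\Ht^N_\epsilon=\sum_k\frac{\partial\Ht^N_\epsilon}{\partial v_k}\cdot\dot v_k$ and splitting these two dependencies, the first yields $w_k\,\nabla\tilde f^N(v_k)/\tilde f^N(v_k)$ and the second yields $-w_k\sum_i w_i\,\nabla\psi_\epsilon(v_i-v_k)/\tilde f^N(v_i)$; using that $\nabla\psi_\epsilon$ is odd (and that the $i=k$ term vanishes since $\nabla\psi_\epsilon(0)=0$) the latter becomes $w_k\sum_i w_i\,\nabla\psi_\epsilon(v_k-v_i)/\tilde f^N(v_i)$, so that
\[
\frac{\partial\Ht^N_\epsilon}{\partial v_k}
= w_k\left(\frac{\nabla\tilde f^N(v_k)}{\tilde f^N(v_k)} + \sum_i w_i\frac{\nabla\psi_\epsilon(v_k-v_i)}{\tilde f^N(v_i)}\right) = w_k\, g_k ,
\]
exactly $w_k$ times the discrete variation built into \eqref{eq:det_scheme}. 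Hence $\frac{\mathrm d}{\mathrm dt}\Ht^N_\epsilon=\sum_k w_k\, g_k\cdot\dot v_k$. I expect this ``discrete chain rule'' to be the step requiring the most care: it is where the particular anti-symmetric form \eqref{eq:antisimm} of the regularisation is used, and one must check that the non-smooth-looking discrete entropy $\sum_i w_i\log(\sum_j w_j\psi_\epsilon(v_i-v_j))$ has precisely the gradient $w_k g_k$ appearing in the ODE, with the two contributions recombining correctly thanks to the evenness of $\psi_\epsilon$.

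Finally, substituting $\dot v_k=-\sum_j w_j A(v_k-v_j)(g_k-g_j)$ gives $\frac{\mathrm d}{\mathrm dt}\Ht^N_\epsilon=-\sum_{k,j} w_k w_j\, g_k\cdot A(v_k-v_j)(g_k-g_j)$. Relabelling $k\leftrightarrow j$ and using $A(v_k-v_j)=A(v_j-v_k)$ produces a second representation with $g_j$ in place of $g_k$; averaging the two yields
\[
\frac{\mathrm d}{\mathrm dt}\Ht^N_\epsilon = -\frac12\sum_{k,j} w_k w_j\,(g_k-g_j)\cdot A(v_k-v_j)(g_k-g_j) = -\mathcal D^N_\epsilon ,
\]
and $\mathcal D^N_\epsilon\ge 0$ because $w_k,w_j>0$ and each $A(v_k-v_j)$ is positive semi-definite. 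Since $b^N_\epsilon(v_k,v_j)=g_k-g_j$, this is precisely the asserted dissipation identity, which completes the proof.
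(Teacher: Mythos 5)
Your proposal is correct and follows essentially the same route as the paper: the conservation laws via symmetrisation in $i\leftrightarrow j$ together with $A(q)q=0$, and the dissipation via the observation that the gradient of the discrete entropy with respect to $v_k$ recombines (using the oddness of $\nabla\psi_\epsilon$) into exactly $w_k$ times the anti-symmetric discrete variation driving the ODE, followed by the same final symmetrisation and the positive semi-definiteness of $A$. The only cosmetic difference is that you compute $\partial\Ht^N_\epsilon/\partial v_k$ first and then apply the chain rule, whereas the paper differentiates in time first and relabels indices afterwards.
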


\begin{proof}
To prove the conservations, we write the time evolution of any test function $\phi=\phi(v)$ depending on the particles:
\begin{align}
\frac{\mathrm{d}}{\mathrm{d}t} \sum_{i=1} w_i \phi(v_i) & = \sum_{i=1}^N w_i \nabla_v \phi(v_i) \cdot \frac{\mathrm{d}}{\mathrm{d}t} v_i \\
& = - \sum_{i,j=1}^N w_i w_j \nabla_v \phi(v_i) \cdot A(v_i-v_j) b(v_i,v_j) \\
& = - \frac{1}{2} \sum_{i,j=1}^N w_i w_j \left( \nabla_v \phi(v_i) - \nabla_{v} \phi(v_j) \right) \cdot A(v_i-v_j) b(v_i,v_j),
\end{align}
invoking the symmetry of $A$ and the antisymmetry of $b$: $A(v) = A(-v)$ and $b(v,v_*) = -b(v_*,v)$. The last line of the previous computation vanishes identically for $\phi(v)=1,v,|v|^2$.

To prove the dissipation, we compute:
\begin{align}
\frac{\mathrm{d}}{\mathrm{d}t} \Ht^N_\epsilon & = \sum_{i=1}^N w_i \frac{\sum_{j=1}^N w_j \nabla_{v} \psi_\epsilon(v_i-v_j) \cdot \frac{\mathrm{d}}{\mathrm{d}t}(v_i-v_j)}{\sum_{k=1}^{N}w_k\psi_\epsilon(v_i-v_k)} \\
& = \sum_{i,j=1}^N \frac{w_i w_j \nabla_{v} \psi_\epsilon(v_i-v_j)}{\sum_{k=1}^{N}w_k\psi_\epsilon(v_i-v_k)} \cdot \frac{\mathrm{d}}{\mathrm{d}t}(v_i-v_j) \\
& = \sum_{i=1}^N w_i \left( \frac{\sum_{j=1}^N w_j \nabla_{v} \psi_\epsilon(v_i-v_j) }{\sum_{k=1}^{N}w_k\psi_\epsilon(v_i-v_k)} + \sum_{j=1}^N \frac{ w_j \nabla_{v} \psi_\epsilon(v_i-v_j) }{\sum_{k=1}^{N}w_k\psi_\epsilon(v_i-v_k)} \right) \cdot \frac{\mathrm{d}}{\mathrm{d}t} v_i \\
& = \sum_{i=1}^N w_i \nabla_{v} \frac{\delta \Ht^N_\epsilon}{\delta f} (v_i) \cdot \frac{\mathrm{d}}{\mathrm{d}t} v_i \\
& = \sum_{i,j=1}^N w_i w_j \nabla_{v} \frac{\delta \Ht^N_\epsilon}{\delta f} (v_i) \cdot A(v_i-v_j) b(v_i,v_j) \\
& = - \frac{1}{2} \sum_{i,j=1}^N w_i w_j \left( \nabla_{v} \frac{\delta \Ht^N_\epsilon}{\delta f} (v_i) - \nabla_{v} \frac{\delta \Ht^N_\epsilon}{\delta f} (v_j) \right) \cdot A(v_i-v_j) b(v_i,v_j) \\
& = - \frac{1}{2} \sum_{i,j=1}^N w_i w_j b(v_i,v_j) \cdot A(v_i-v_j) b(v_i,v_j) \leq 0,
\end{align}
where we have swapped the $i$ and $j$ labels on the right summand, and have exploited the anti-symmetry of $\nabla_{v} \psi_\epsilon(v)$. 
The last line is non-positive because the matrix $A$ is positive semi-definite.
\end{proof}

\section{Stochastic Galerkin particle method} \label{sec:unc}

This section extends the particle method to handle uncertainty in the initial condition and parameters of the Landau equation.

\subsection{The Landau equation with uncertainty}

We consider now the space homogeneous Landau equation with uncertainty
\begin{align}
\partial_t f(v,t,\z)
& = - \nabla_v\cdot(f(v,t,\z)U(f)(v,t,\z))\\
& = \nabla_v \cdot \int_{\mathbb R^d} A(v-v_*,\z)\left[ \nabla_v f(v,\z) f(v_*,\z) - \nabla_{v_*}f(v_*,\z)f(v,\z)\right]\,\mathrm{d}v_*,
\end{align}
where $\z = (z_1,\dots,z_{d_{\z}})\in\Rdz$ is a random vector containing all the unknowns of the system. We assume $\z$ has a given distribution $p(\z)$ such that
\begin{equation}
\textrm{Prob}(\z\in I_{\z}) = \int_{I_{\z}} p(\z) \,\mathrm{d} \z 
\end{equation}
for any $I_{\z}\subseteq\Rdz$. The vector $\z$ characterises the missing information on the system due, for instance, to uncertain initial conditions, or to uncertain model parameters (e.g. the exponent of the interaction).

Following the previous notation, the regularised counterpart of the equation is
\begin{equation}\label{eq:LFP_unc}
\partial_t f(v,t,\z) = - \nabla_v\cdot(f(v,t,\z)U_\epsilon(f)(v,t,\z)).
\end{equation}
As in \eqref{eq:emp}, we consider a set of $N$ particles, defining the empirical distribution
\begin{equation}
f^N(v,t,\z) = \sum_{i=1}^N w_i(\z) \delta(v-v_i(t,\z));
\end{equation}
analogously, we introduce the smoothed solution
\begin{equation}
\tilde{f}^N(v,t,\z) \coloneqq (f^N \ast \psi_\epsilon) (v,t,\z) = \sum_{i=1}^N w_i(\z) \psi_\epsilon(v-v_i(t,\z)).
\end{equation}
We will assume in the following that the weights $w_i>0$ are independent from the uncertain vector $\z$, considering only the velocities as functions of $\z$.

A particle method for \eqref{eq:LFP_unc} reads
\begin{align}\label{eq:unc_scheme}
\frac{\mathrm{d} v_i(t,\z)}{\mathrm{d}t} &= U_\epsilon(f^N)(v_i,t,\z) = - \sum_{j=1}^N w_j A_{ij}(t,\z) b^N_{\epsilon,ij}(t,\z) \\
&= -\sum_{j=1}^N w_j A_{ij}(t,\z)\left(\nabla\frac{\delta \Ht^{N}_{\epsilon}}{\delta f}(v_i(\z)) - \nabla\frac{\delta \Ht^{N}_{\epsilon}}{\delta f}(v_j(\z)) \right),
\end{align}
where we have denoted 
\begin{equation}
A_{ij}(t,\z)=A(v_i(t,\z)-v_j(t,\z))
\quad\text{and}\quad
b^N_{\epsilon,ij}(t,\z)=b^N_{\epsilon}(v_i(t,\z),v_j(t,\z)).
\end{equation}
Again, the term $b^N_{\epsilon,ij}(t,\z)$ can be given by the different regularisations:
\begin{enumerate}[(a)]
\item symmetric,
\begin{equation}
\nabla \frac{\delta \Ht^{N}_\epsilon}{\delta f}(v_i(\z)) = \int_{\mathbb R^d} \nabla\psi_\epsilon(v_i(\z)-v) \log\left( \tilde{f}^{N}(v,\z) \right) \,\mathrm{d}v; 
\end{equation}
\item anti-symmetric,
\begin{equation}
\nabla \frac{\delta \Ht^{N}_\epsilon}{\delta f}(v_i(\z)) = \frac{\nabla\tilde{f}^{N}(v_i(\z))}{\tilde{f}^{N}(v_i(\z))} + \sum_{j=1}^N w_j \frac{\nabla\psi_\epsilon(v_i(\z)-v_j(\z))}{\tilde{f}^{N}(v_j(\z))}.
\end{equation}
\end{enumerate}

\begin{remark} \label{th:remark1}
The \ac{sg} particle method \eqref{eq:unc_scheme} retains the properties of the deterministic method; namely, the conservation of the invariant quantities and the dissipation of the entropy. This follows immediately from the observation that Theorems \ref{th:prop_det_simm}-\ref{th:prop_det} hold pointwise in $\z$.
\end{remark}

\subsection{Stochastic Galerkin projection of the particle method}
In the following, we will construct a \acf{gpc} expansion of the particle method \eqref{eq:unc_scheme} in the space of the random parameters. A similar approach, based on a Monte Carlo approximation of the distribution function, has been proposed for the homogeneous Boltzmann \cite{Pareschi2020} and Landau \cite{medaglia2023} equations, and for a Vlasov-Poisson-BGK system \cite{medaglia2022JCP}. Other applications can be found in \cite{Carrillo19,CarrilloPZ19,Medaglia2022}.

In order to perform an \ac{sg} expansion of the scheme, we choose a set of $M+1$ polynomials of degree at most $M$, $\varPsi=\{\Psi_m(\z)\}_{m=0}^M$, orthonormal with respect to the distribution $p(\z)$:
\begin{equation}
\int_{I_{\z}} \Psi_m(\z) \Psi_n(\z) p(\z) \,\mathrm{d} \z = \mathbb{E}_{\z}[\Psi_m(\cdot)\Psi_n(\cdot)] = \delta_{mn}.	
\end{equation}
The velocity of each particle can then be approximated by its projection onto the span of $\varPsi$
\begin{equation} \label{eq:gPC}
v_i(t,\z) \approx v^M_i(t,\z) = \sum_{m=0}^{M} \hat{v}_{i,m}(t) \Psi_m(\z),
\end{equation}
where the coefficient of the expansion is
\begin{equation}
\hat{v}_{i,m}(t) \coloneqq \int_{I_{\z}} v_i(t,\z) \Psi_m(\z) p(\z) \,\mathrm{d} \z = \mathbb{E}_{\z}[v_i(t,\cdot)\Psi_m(\cdot)].
\end{equation}
The corresponding empirical and smoothed distributions are denoted respectively by
\begin{equation}
f^{N,M}(v,t,\z) = \sum_{i=1}^N w_i \delta(v-v^M_i(t,\z))
\quad\text{and}\quad
\tilde{f}^{N,M}(v,t,\z) = \sum_{i=1}^N w_i \psi_\epsilon(v-v^M_i(t,\z)).
\end{equation}
The \ac{sg} particle method is found by substituting the \ac{gpc} approximation $v^M_i(t,\z)$ into \eqref{eq:unc_scheme}, and then projecting the scheme onto the span of $\varPsi$:
\begin{align} 
\frac{\mathrm{d}}{\mathrm{d}t} \hat{v}_{i,m}(t) &= \int_{I_{\z}} U_\epsilon(f^{N,M})(v_i,t,\z) \Psi_m(\z) p(\z) \,\mathrm{d} \z, \label{eq:particleSG}\\
U_\epsilon(f^{N,M})(v_i,t,\z) &= - \sum_{j=1}^N w_j A^M_{ij}(t,\z) b^{N,M}_{\epsilon,ij}(t,\z), \\
b^{N,M}_{\epsilon,ij}(t,\z) &= \nabla\frac{\delta \Ht^{N}_{\epsilon}}{\delta f}(v^M_i(\z)) - \nabla\frac{\delta \Ht^{N}_{\epsilon}}{\delta f}(v^M_j(\z)),
\end{align}
where $A^M_{ij}(t,\z)=A(v^M_i(t,\z)-v^M_j(t,\z))$ and $b^{N,M}_{\epsilon,ij}(t,\z)$; once again there is a choice of regularisation:
\begin{enumerate}[(a)]
\item symmetric regularisation,
\begin{equation}
\nabla\frac{\delta \Ht^{N}_{\epsilon}}{\delta f}(v^M_i(\z)) = \int_{\mathbb R^d} \nabla\psi_\epsilon(v-v^M_i(\z)) \log\left( \tilde{f}^N (v,t) \right) \,\mathrm{d}v;
\end{equation}
\item anti-symmetric regularisation,
\begin{equation}
\nabla\frac{\delta \Ht^{N}_{\epsilon}}{\delta f}(v^M_i(\z)) = \frac{\nabla\tilde{f}^N(v^M_i(\z))}{\tilde{f}^N(v^M_i(\z))} + \sum_{k=1}^N w_k \frac{\nabla\psi_\epsilon(v^M_i(\z)-v^M_k(\z))}{\tilde{f}^N(v^M_k(\z))}.
\end{equation}
\end{enumerate}
The method \eqref{eq:particleSG} is a fully coupled system for the projections $\{\hat{v}_{i,m},\,i=1,\dots,N,\,m=0,\dots,M\}$, with computational complexity $O(N^2M^2)$. 

In practice, the integral in \eqref{eq:particleSG} will be computed using Gaussian quadrature. Once the $L$ nodes $\{z_l\}_{l=1}^{L}$ and $L$ weights $\{\omega_l\}_{l=1}^{L}$ have been chosen, the scheme reduces to
\begin{equation}\label{eq:particlesG_quad}
\frac{\mathrm{d}}{\mathrm{d}t} \hat{v}_{i,m}(t) \approx \sum_{l=1}^L U_\epsilon(f^{N,M})(v_i,t,z_l) \Psi_m(z_l) \omega_l.
\end{equation}

\subsection{Regularity in the space of the random parameters}
In Remark \ref{th:remark1}, we have observed the properties of the \ac{sg} method for a fixed parameter $\z\in\Rdz$. Here, we investigate the regularity of the discrete particle method \eqref{eq:unc_scheme} in the space of the random parameters. 

\begin{theorem}\label{th:vz}
Consider $\{v_i(t,\z)\}_{i=1}^N$, a particle solution to the regularised Lan\-dau-Fokker-Planck equation \eqref{eq:LFP_unc} at time $t\geq0$, following scheme \eqref{eq:unc_scheme}. We have
%\begin{equation}
%\frac{\mathrm{d}}{\mathrm{d}t} \sum_{i=1}^N w_i\| v_i(t,\z) \|^2_{L^2(I_{\z})} = 0,
%\end{equation}
%so that
\begin{equation}
\sum_{i=1}^N w_i\| v_i(t,\z) \|^2_{L^2(I_{\z})} = 	\sum_{i=1}^N w_i \| v_i(0,\z) \|^2_{L^2(I_{\z})}.
\end{equation}
\end{theorem}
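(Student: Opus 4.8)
The plan is to show that the quantity $\sum_{i=1}^N w_i \|v_i(t,\z)\|^2_{L^2(I_{\z})} = \sum_{i=1}^N w_i \int_{I_{\z}} |v_i(t,\z)|^2 p(\z)\,\mathrm{d}\z$ is conserved in time by differentiating under the integral sign and then invoking the energy-conservation property of the deterministic scheme pointwise in $\z$. Concretely, I would first write
\begin{equation*}
\frac{\mathrm{d}}{\mathrm{d}t} \sum_{i=1}^N w_i \|v_i(t,\z)\|^2_{L^2(I_{\z})} = \int_{I_{\z}} \left( \frac{\mathrm{d}}{\mathrm{d}t} \sum_{i=1}^N w_i |v_i(t,\z)|^2 \right) p(\z)\,\mathrm{d}\z,
\end{equation*}
assuming enough regularity/integrability in $\z$ to justify exchanging $\frac{\mathrm{d}}{\mathrm{d}t}$ and $\int_{I_{\z}}$ (this is where one needs, e.g., local boundedness of the velocities and the Gaussian mollifier structure ensuring the velocity field is smooth and bounded on compact $\z$-sets).

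The second step is the heart of the matter: for each fixed $\z$, the particles $\{v_i(t,\z)\}_i$ solve the deterministic scheme \eqref{eq:unc_scheme}, which is exactly scheme \eqref{eq:det_scheme} with $A_{ij}$ and $b^N_{\epsilon,ij}$ evaluated at the $\z$-dependent positions. By Remark \ref{th:remark1} — equivalently, by applying the energy-conservation computation in the proof of Theorem \ref{th:prop_det} (or Theorem \ref{th:prop_det_simm}) pointwise in $\z$ with the test function $\phi(v) = |v|^2$ — we have
\begin{equation*}
\frac{\mathrm{d}}{\mathrm{d}t} \sum_{i=1}^N w_i |v_i(t,\z)|^2 = -\frac{1}{2}\sum_{i,j=1}^N w_i w_j \left(\nabla\phi(v_i) - \nabla\phi(v_j)\right)\cdot A(v_i-v_j)\, b^N_\epsilon(v_i,v_j) = 0,
\end{equation*}
since with $\phi=|v|^2$ one has $\nabla\phi(v_i) - \nabla\phi(v_j) = 2(v_i - v_j)$, which lies in the range of $q\otimes q/|q|^2$ for $q = v_i - v_j$, hence is annihilated by $A(v_i-v_j) = C|q|^{\gamma+2} S(q)$ because $S(q)$ projects onto the orthogonal complement of $q$. (This holds even with the $\z$-dependence in $A$, as long as the cross-section retains the projection structure $S(q)$.) Plugging this identically-zero integrand back into the first display gives $\frac{\mathrm{d}}{\mathrm{d}t}\sum_i w_i\|v_i(t,\z)\|^2_{L^2(I_{\z})} = 0$, and integrating in time from $0$ to $t$ yields the claim.

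The main obstacle is the justification of differentiation under the integral sign — i.e., confirming that $t\mapsto v_i(t,\cdot)$ is differentiable as an $L^2(I_{\z})$-valued map and that the $t$-derivative of the integrand is dominated uniformly. This requires knowing that the right-hand side $U_\epsilon(f^N)(v_i,t,\z)$ is well-behaved in $\z$; the Gaussian mollifier $\psi_\epsilon$ guarantees $\tilde f^N$ is smooth and strictly positive, so $b^N_\epsilon$ and $A_{ij}$ are smooth in the particle positions, and one propagates $\z$-regularity from the initial data via Grönwall-type estimates on the ODE system. I would either cite such a regularity estimate (which the paper presumably establishes alongside Theorem \ref{th:vz}) or, if a self-contained argument is wanted, first prove the conservation at the level of the finite-dimensional gPC/quadrature system \eqref{eq:particlesG_quad} where everything is a finite sum and the exchange is trivial, then pass to the limit. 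Everything after that exchange is the purely algebraic pointwise-in-$\z$ energy cancellation, which is immediate from the results already in the excerpt.
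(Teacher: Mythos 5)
Your proposal is correct and follows essentially the same route as the paper: the paper likewise writes the symmetrised evolution of a test function $\phi$, takes $\phi(v)=|v|^2$, integrates against $p(\z)\,\mathrm{d}\z$, and invokes the pointwise-in-$\z$ energy conservation of Theorem \ref{th:prop_det}. Your additional remarks on justifying the exchange of $\frac{\mathrm{d}}{\mathrm{d}t}$ with $\int_{I_\z}$ and on the projection structure of $A$ are sound elaborations of steps the paper leaves implicit.
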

\begin{proof}
To make explicitly the computation, we consider the time evolution of any test function $\phi$:
\begin{equation} \label{eq:variation}
\frac{\mathrm{d}}{\mathrm{d}t} \sum_{i=1}^{N} w_i \phi(v_i(\z)) = - \frac{1}{2} \sum_{i,j=1}^N w_i w_j \left( \nabla\phi(v_i(\z)) - \nabla\phi(v_j(\z))\right) 
\cdot \left( A_{ij}(t,\z) b^N_{\epsilon,ij}(t,\z) \right),
\end{equation}
where we have omitted the time dependency of $v_i$ for the sake of brevity.
If we choose $\phi(v_i(t,\z)) = |v_i(t,\z)|^2$, and then we integrate in $I_{\z}$ against $p(\z) \,\mathrm{d} \z$, we obtain
\begin{equation}
\sum_{i=1}^N w_i \frac{\mathrm{d}}{\mathrm{d}t} \| v_i(t,\z) \|^2_{L^2(\Omega)} = 0,
\end{equation}
exploiting the conservation of energy result proven in the previous section.
\end{proof}

\begin{theorem} \label{th:dzvz}
Consider $\{v_i(t,\z)\}_{i=1}^N$, a particle solution to the regularised Lan\-dau-Fokker-Planck equation \eqref{eq:LFP_unc} at time $t\geq0$, following the scheme \eqref{eq:unc_scheme}. Consider also the constant $0\leq C_A < + \infty$ such that
\begin{equation}
\|\partial^k_{\z} A_{ij}(t,\z) \|_{L^\infty(I_\z)} \leq C_A,
\quad\text{for }
k=0,1,
\end{equation}
and the constants 
$0\leq C_B < + \infty $ and $0\leq C_{\partial_{\z}D} < + \infty$ such that
\begin{equation}
C_B = 4\frac{\log^2 N}{\epsilon^2} \left(\frac{2\epsilon}{\pi}\right)^{d}, \qquad C_{\partial_{\z}D} = \frac{\log(N)}{\epsilon}\left(1+\epsilon^{d-1}\right) .
\end{equation}
Defining $K_1 \coloneqq 2 C_A (1+ 3C_{\partial_{\z}D})$ and $K_2 \coloneqq 2 N C_A C_B$,
we find
\begin{equation}
\sum_{i=1}^{N} \| \partial_{\z} v_i(t,\z) \|^2_{L^2(I_\z)} \leq e^{ K_1 t} \sum_{i=1}^{N} \| \partial_{\z} v_i(0,\z) \|^2_{L^2(I_\z)} + K_2 t.
\end{equation}
\end{theorem}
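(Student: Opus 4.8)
The plan is to close a Grönwall inequality for the scalar functional
\[
\Phi(t)\coloneqq\sum_{i=1}^N\bigl\|\partial_\z v_i(t,\cdot)\bigr\|_{L^2(I_\z)}^2 .
\]
First I would observe that the right-hand side of \eqref{eq:unc_scheme} is a smooth function of the particle positions: all convolutions involved are against the real-analytic mollifier $\psi_\epsilon$, and in the anti-symmetric case the denominators $\tilde{f}^N(v_k)=\sum_l w_l\psi_\epsilon(v_k-v_l)$ are bounded away from zero by the diagonal term $w_k\psi_\epsilon(0)$. Hence $\z\mapsto v_i(t,\z)$ is $C^1$ by the standard smooth dependence of ODE solutions on parameters, and I may differentiate \eqref{eq:unc_scheme} in $\z$ (treating $\partial_\z$ componentwise, the general case following by summation over the components of $\z$):
\[
\frac{\mathrm d}{\mathrm dt}\,\partial_\z v_i=-\sum_{j=1}^N w_j\Bigl[(\partial_\z A_{ij})\,b^N_{\epsilon,ij}+A_{ij}\,\partial_\z b^N_{\epsilon,ij}\Bigr].
\]

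Next I would differentiate $\Phi$ and split,
\[
\tfrac12\frac{\mathrm d}{\mathrm dt}\Phi(t)=\sum_{i=1}^N\int_{I_\z}\partial_\z v_i\cdot\frac{\mathrm d}{\mathrm dt}\partial_\z v_i\;p(\z)\,\mathrm d\z=-(T_1+T_2),
\]
where $T_1$ collects the $(\partial_\z A_{ij})b^N_{\epsilon,ij}$ contributions and $T_2$ the $A_{ij}\,\partial_\z b^N_{\epsilon,ij}$ ones. For $T_1$ I would use the standing bound $\|\partial_\z A_{ij}\|_{L^\infty(I_\z)}\le C_A$, a pointwise-in-$\z$ estimate $|b^N_{\epsilon,ij}|^2\le C_B$, Young's inequality, and $\sum_i w_i=1$, to get $|T_1|\le C_A\bigl(\tfrac12\Phi(t)+\tfrac12 N C_B\bigr)$. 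For $T_2$ I would use $\|A_{ij}\|_{L^\infty(I_\z)}\le C_A$ together with a pointwise linear bound of the form
\[
\bigl|\partial_\z b^N_{\epsilon,ij}\bigr|\le C_{\partial_\z D}\Bigl(|\partial_\z v_i|+|\partial_\z v_j|+\sum_{k=1}^N w_k|\partial_\z v_k|\Bigr),
\]
followed by Young's and Jensen's inequalities (the latter to absorb the averaged term into $\Phi$), obtaining $|T_2|\lesssim C_A C_{\partial_\z D}\Phi(t)$. Combining the two estimates yields $\frac{\mathrm d}{\mathrm dt}\Phi\le K_1\Phi+K_2$ with $K_1=2C_A(1+3C_{\partial_\z D})$ and $K_2=2N C_A C_B$, and integrating this linear differential inequality gives the stated bound.

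The heart of the proof — and the step I expect to be the main obstacle — is establishing the two pointwise-in-$\z$ inputs above, i.e.\ verifying that the explicit formulae for $C_B$ and $C_{\partial_\z D}$ are admissible. Writing $g\coloneqq\nabla\tfrac{\delta\Ht^N_\epsilon}{\delta f}$, so that $b^N_{\epsilon,ij}=g(v_i)-g(v_j)$, for the anti-symmetric regularisation one has
\[
g(w)=\frac{\nabla\tilde{f}^N(w)}{\tilde{f}^N(w)}+\sum_{k=1}^N w_k\,\frac{\nabla\psi_\epsilon(w-v_k)}{\tilde{f}^N(v_k)},
\]
and I would bound $g$ and its $\z$-derivative using: the Gaussian identity $\nabla\psi_\epsilon(v)=-\tfrac1\epsilon v\,\psi_\epsilon(v)$ and the sharp bound $\sup_v|\nabla\psi_\epsilon(v)|=e^{-1/2}\epsilon^{-1/2}(2\pi\epsilon)^{-d/2}$; the two-sided control $w_k(2\pi\epsilon)^{-d/2}=w_k\psi_\epsilon(0)\le\tilde{f}^N(v_k)\le(2\pi\epsilon)^{-d/2}$ (the upper bound again from $\sum_i w_i=1$); and a uniform-in-$\z$ control of $\max_{i,j}|v_i-v_j|$ and of $|\log\tilde{f}^N|$, the latter being $O(\log N)$ when $w_i=1/N$ — which is where the logarithmic factors in $C_B$ and $C_{\partial_\z D}$ originate. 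Differentiating $g$ once more in $\z$ and applying the quotient rule term by term, every contribution is some $\partial_\z v_k$ multiplied by a kernel factor controlled by the quantities above, which produces the linear-in-$(\partial_\z v_k)$ bound with constant $C_{\partial_\z D}$; the symmetric regularisation is treated identically with $g(w)=\int_{\R^d}\nabla\psi_\epsilon(v-w)\log\tilde{f}^N(v)\,\mathrm dv$, now estimating $\log\tilde{f}^N$ and its $\z$-derivative under the integral. Once these pointwise estimates are secured the Grönwall bookkeeping above is routine; the care is entirely in tracking the constants cleanly and in keeping the $\z$-dependence hidden inside $A_{ij}$ under the standing hypothesis.
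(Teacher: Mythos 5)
Your proposal follows essentially the same route as the paper: differentiate the particle ODE in $\z$, test against $\partial_\z v_i$, split into the $(\partial_\z A_{ij})b^N_{\epsilon,ij}$ and $A_{ij}\partial_\z b^N_{\epsilon,ij}$ contributions, control them via $C_A$, the pointwise bound $C_B$ on $|b^N_{\epsilon,ij}|^2$, and a linear-in-$\partial_\z v$ bound with constant $C_{\partial_\z D}$, then close with Young's inequality and Gr\"onwall. The only cosmetic difference is that you derive the $C_{\partial_\z D}$ estimate from the anti-symmetric regularisation while the paper invokes the symmetric one (via the moments of the Gaussian mollifier and the $O(\log N)$ bound on $\log\tilde f^N$), but the structure and the resulting constants are the same.
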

\begin{proof}
We first take the $\z$ derivative of the particle method \eqref{eq:unc_scheme}; then we multiply by $2\partial_{\z}v_i(t,\z) p(\z) \,\mathrm{d} \z$ and integrate over $I_\z$ to obtain
\begin{align}
\frac{\mathrm{d}}{\mathrm{d}t} \| \partial_{\z} v_i(t,\z) \|^2_{L^2(I_{\z})} & = - 2 \int_{I_{\z}} \sum_{j=1}^N w_j \partial_{\z} v_i(t,\z) \cdot \partial_{\z}\left( A_{ij}(t,\z) b^N_{\epsilon,ij}(t,\z) \right) p(\z) \,\mathrm{d} \z \\
& \leq 2 \sum_{j=1}^N w_j \int_{I_{\z}} |\partial_{\z} v_i(t,\z) \cdot \partial_{\z}\left( A_{ij}(t,\z) b^N_{\epsilon,ij}(t,\z) \right) | p(\z) \,\mathrm{d} \z \\
& \leq 2 C_A \sum_{j=1}^N w_j \int_{I_{\z}} \left(|\partial_{\z} v_i(t,\z)| |b^N_{\epsilon,ij}(t,\z)| \right.\\
&\qquad\qquad \qquad\qquad \left.+ |\partial_{\z} v_i(t,\z)||\partial_{\z}b^N_{\epsilon,ij}(t,\z) | \right) p(\z) \,\mathrm{d} \z \\
& = 2 C_A \sum_{j=1}^N w_j \left(I + II \right).
\end{align}
Applying Young's inequality on the term $I$, we find
\begin{equation}
I \leq \| \partial_{\z} v_i(t,\z) \|^2_{L^2(I_\z)} + C_B,
\end{equation}
since 
\begin{equation}
\int_{I_{\z}} |b^N_{\epsilon,ij}(t,\z)|^2 p(\z) \,\mathrm{d} \z \leq 4\frac{\log^2 N}{\epsilon^2} \left(\frac{2\epsilon}{\pi}\right)^{d} \coloneqq C_B .
\end{equation}
To control term $II$, we consider the symmetric regularisation and we exploit the finiteness of the zeroth and second moments of the Gaussian mollifier:
\begin{align}
II &\leq \log(N) \int_{I_{\z}} |\partial_{\z} v_i(t,\z)| \left(\int_{\R^{d}} |\partial_{\z}\nabla\left( \psi_\epsilon(v_i-v) - \psi_\epsilon(v_j-v) \right) |\,\mathrm{d}v \right)p(\z) \,\mathrm{d} \z \\
& \leq C_{\partial_{\z}D} \int_{I_{\z}} |\partial_{\z} v_i(t,\z)| \left( |\partial_{\z} v_i(t,\z)| + |\partial_{\z} v_j(t,\z)| \right)p(\z) \,\mathrm{d} \z \\
& \leq C_{\partial_{\z}D} \left( \| \partial_{\z} v_i(t,\z) \|^2_{L^2(I_\z)} + \| \partial_{\z} v_i(t,\z) \|_{L^2(I_\z)} \| \partial_{\z} v_j(t,\z) \|_{L^2(I_\z)} \right).
\end{align}

We thus have
\begin{align}
\frac{\mathrm{d}}{\mathrm{d}t} \| \partial_{\z} v_i(t,\z) \|^2_{L^2(I_{\z})} & \leq 2 C_A \sum_{j=1}^N w_j \bigg[ C_{\partial_{\z}D}\| \partial_{\z} v_i(t,\z) \|^2_{L^2(I_{\z})} \\
& \quad + \| \partial_{\z} v_i(t,\z) \|_{L^2(I_{\z})} \left(C_B + C_{\partial_{\z}D}\| \partial_{\z} v_j(t,\z) \|_{L^2(I_\z)}\right) \bigg].
\end{align}
Summing over $i$, we obtain
\begin{align}
\frac{\mathrm{d}}{\mathrm{d}t} \sum_{i=1}^{N} \| \partial_{\z} v_i(t,\z) \|^2_{L^2(I_\z)} & \leq \sum_{i=1}^{N} 2 C_A (1+ 3C_{\partial_{\z}D}) \| \partial_{\z} v_i(t,\z) \|^2_{L^2(I_\z)} + 2 N C_A C_B \\
& = \sum_{i=1}^{N} K_1 \| \partial_{\z} v_i(t,\z) \|^2_{L^2(I_\z)} + K_2.
\end{align}
Applying Gronwall's lemma, we finally arrive at
\begin{equation}
\sum_{i=1}^{N} \| \partial_{\z} v_i(t,\z) \|^2_{L^2(I_\z)} \leq e^{ K_1 t} \sum_{i=1}^{N} \| \partial_{\z} v_i(0,\z) \|^2_{L^2(I_\z)} + K_2 t.
\end{equation}
\end{proof}

\begin{remark}
Theorems \ref{th:vz}-\ref{th:dzvz} imply that, provided $v_i(0,\z)$ and $\partial_{\z} v_i(0,\z)$ are in $L^2(I_\z)$ at the initial time $t=0$, then $v_i(t,\z)$ and $\partial_{\z} v_i(t,\z)$ remain in $L^2(I_\z)$ for times $t\in[0,T_f]$, with $T_f<+\infty$, under suitable assumptions. Therefore, in the time span $[0,T_f]$, we have 
\begin{equation}
v_i(t,\z)\in H^1(I_\z)\coloneqq\left\{ v \; : \; I_\z \to \R^d \; | \; \frac{\partial^k v}{\partial \z^k} \in L^2(I_\z), \; k=0,1 \right\}.
\end{equation}
\end{remark}

\section{Numerical tests} \label{sec:num}

We now present numerical tests to validate the \acf{sg} particle method. In \textbf{Test 1}, we show spectral convergence for the Maxwell and Coulomb cases in the presence of an uncertain initial temperature. In \textbf{Test 2}, we investigate the case with uncertainty in the exponent $\gamma$, a mixed potential scenario with a deterministic equilibrium distribution. In \textbf{Test 3}, we consider a two-dimensional uncertain parameter, and we verify the spectral convergence of the method. In \textbf{Test 4}, we compare the \ac{sg} particle method against the exact \ac{bkw} solution of the Maxwell case with uncertainty. In \textbf{Test 5}, we test the method against Trubnikov's formula for both Maxwellian and Coulombian molecules. In \textbf{Test 6}, we study the trend to equilibrium of the Coulomb model from an initial condition consisting of the sum of three Gaussians.

We will discretise the time span in $n_{\textrm{TOT}}$ time steps of size $\Delta t>0$, and we shall denote with $v_i^{n}(\z)$ the approximation of $v_i(t^n,\z)$ at the discrete time $t^n$. We employ the forward Euler scheme for the time discretisation; \eqref{eq:unc_scheme} becomes
\begin{equation}
v_i^{n+1}(\z) = v_i^{n}(\z) - \Delta t \sum_{j=1}^N w_j A^n_{ij}(\z)\left(\nabla\frac{\delta \Ht^{N}_{\epsilon}}{\delta f}(v^n_i(\z)) - \nabla\frac{\delta \Ht^{N}_{\epsilon}}{\delta f}(v^n_j(\z)) \right),
\end{equation}
and consequently \eqref{eq:particleSG} reads
\begin{align} 
\hat{v}^{n+1}_{i,m} &= \hat{v}^{n}_{i,m} - \Delta t \int_{I_{\z}} \sum_{j=1}^N w_j A^{n,M}_{ij}(\z) b^{n,N,M}_{\epsilon,ij}(\z) \Psi_m(\z) p(\z) \,\mathrm{d} \z, \\
b^{n,N,M}_{\epsilon,ij}(\z) &= \nabla\frac{\delta \Ht^{N}_{\epsilon}}{\delta f}(v^{n,M}_i(\z)) - \nabla\frac{\delta \Ht^{N}_{\epsilon}}{\delta f}(v^{n,M}_j(\z)).
\end{align}

All the tests are obtained for the two dimensional velocity domain case $d=2$, with a fixed time step $\Delta t = 0.01$, and for a one or two dimensional uncertain parameter $d_{\z}=1,2$. In the case $\z=(z_1,z_2)$, we assume that the two components are statistically independent, such that $p(\z)=p(z_1)p(z_2)$. Following \cite{Jingwei2020}, the variance of the Gaussian mollified is always $\epsilon = h^2 $, where $h = 2L_v/N$ is the size of the discretisation of the truncated velocity domain $[-L_v,L_v]^2$, and where $N$ is the number of particles. The weights of the particles are fixed to $w_i=1/N$ for every $i=1,\dots,N$. We choose a collision strength $C=1/16$ in the definition of the operator $A$, except in \textbf{Test 5}, where we declare it case by case. The initialisation of the particles is performed with a Monte Carlo sampling; for further details, we refer to Appendix A of \cite{medaglia2022JCP}.

The choice of the orthogonal polynomials is done following the Wiener-Askey scheme \cite{Xiu2002,Xiu2010}. In particular, we will consider the parameter $\z$ under a Uniform or Beta distribution, respectively leading to the Legendre and Jacobi polynomial bases.

\subsection{Test 1: spectral convergence with uncertain temperature}

\begin{figure}
\centering
\includegraphics[width = 0.3\linewidth]{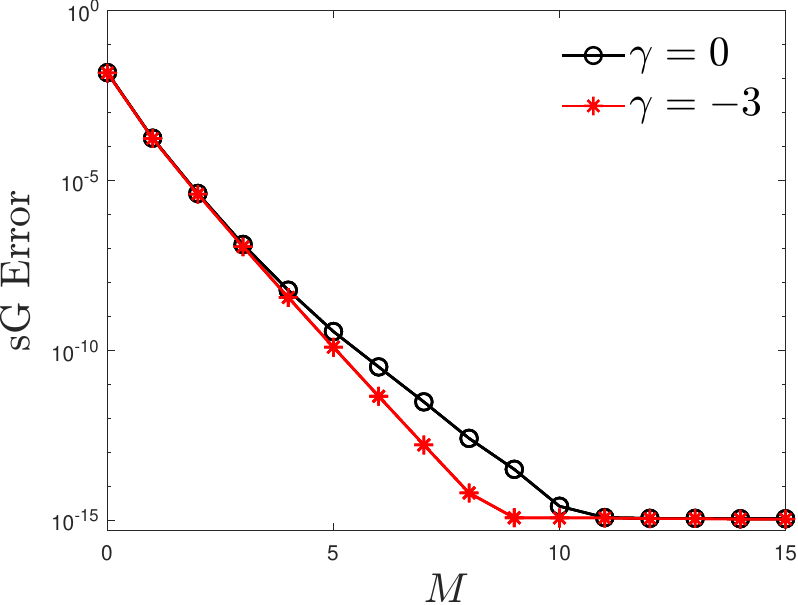}
\includegraphics[width = 0.3\linewidth]{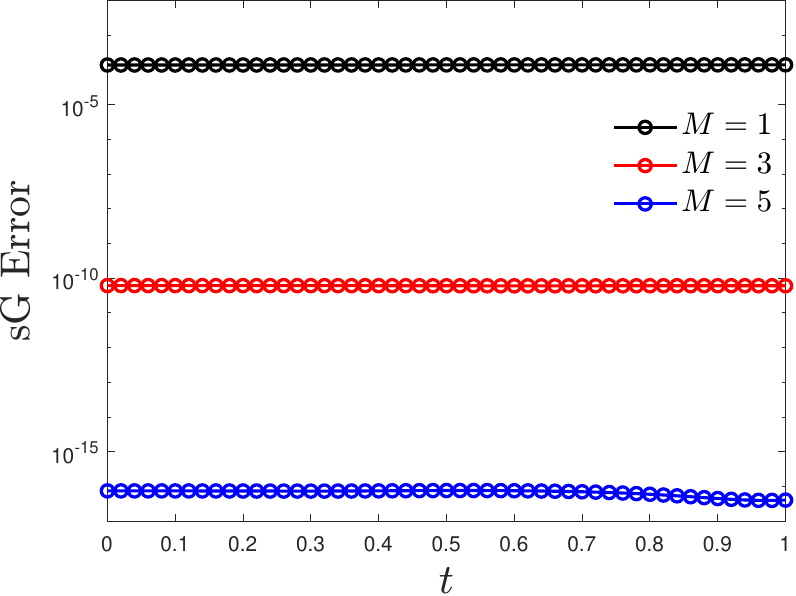}
\includegraphics[width = 0.3\linewidth]{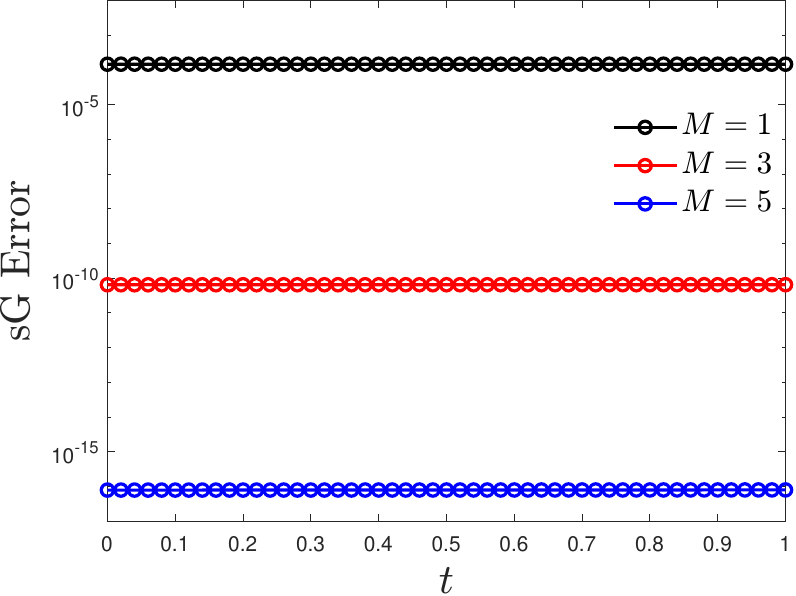}
\caption{\small{\textbf{Test 1}. Left: $L^2$ error of the fourth moment at time $t=1$, for increasing $M$, with respect to a reference solution, for the Maxwell ($\gamma=0$) and Coulomb ($\gamma=-3$) cases. Centre: time evolution of the same error for $\gamma=0$ in the time span $[0,1]$. Right: time evolution of the error for $\gamma=-3$. In all cases, $N=50^2$ and the reference solution is computed with $M^{\textrm{ref}}=30$. Initial conditions given by \eqref{eq:init_test1}}}
\label{fig:test_1}
\end{figure}

In this test, we consider the initial condition
\begin{equation} \label{eq:init_test1}
f^0(v,\z) = \frac{1}{\pi T^2(\z)} |v|^2 e^{-\frac{|v|^2}{T(\z)}} ,
\end{equation}
with $d_\z=1$, and uncertain initial temperature 
\begin{equation}
T(\z) = 1 + \frac{\z}{5},
\quad\textrm{with}\quad
\z\sim\mathcal{U}([0,1]).
\end{equation}
We choose the number of particles as $N=50^2$, and we compute a reference solution with $M^{\textrm{ref}}=30$, storing the initial particle sampling. Then, for different $M$, using the same sampling, we compute the $L^2$ error of the fourth moment of the solution:
\begin{equation}
\textrm{sG Error} = \| \textrm{M4}^{\textrm{ref}}(t,\z) -\textrm{M4}(t,\z) \|_{L^2(I_{\z})},
\end{equation}
where the moment is approximated by
\begin{equation}
\textrm{M4}(t,\z) \approx \frac{1}{N}\sum_{i=1}^{N} |v^M_i(t,\z)|^4. 
\end{equation} 

Figure \ref{fig:test_1} shows the \ac{sg} error for increasing values of $M$ (left), for both the Maxwell ($\gamma=0$) and Coulomb ($\gamma=-3$) cases at a fixed time $t=1$, as well as the time evolution for fixed $M=1,3,5$, both for $\gamma=0$ (centre) and $\gamma=-3$ (right). We observe that machine precision is reached spectrally fast, with a finite number of modes. Besides, the \ac{sg} Error proves approximately constant in time.

\subsection{Test 2: spectral convergence with uncertain potential}
\begin{figure}
\centering
\includegraphics[width = 0.3\linewidth]{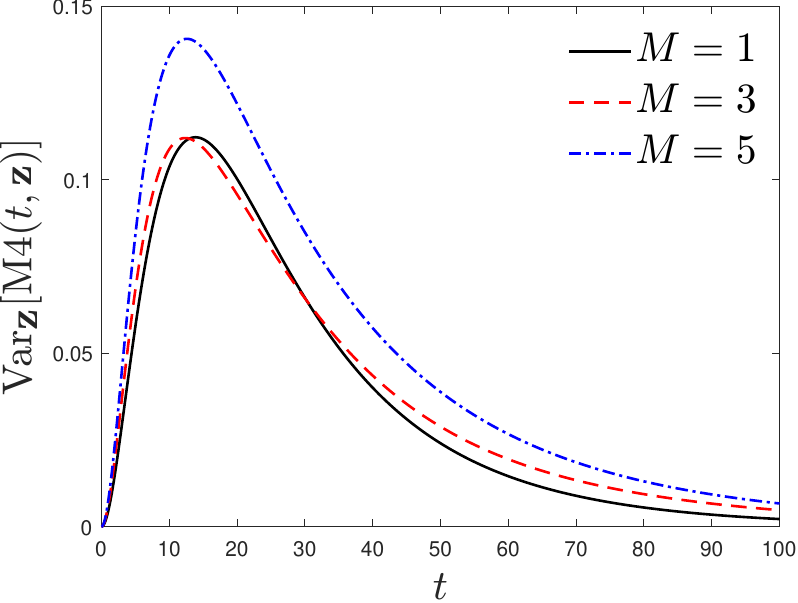}
\includegraphics[width = 0.3\linewidth]{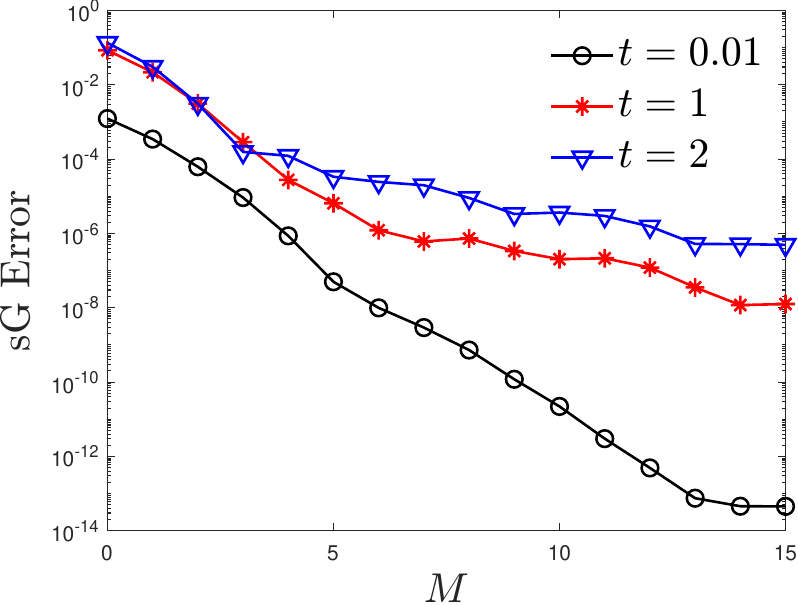}
\includegraphics[width = 0.3\linewidth]{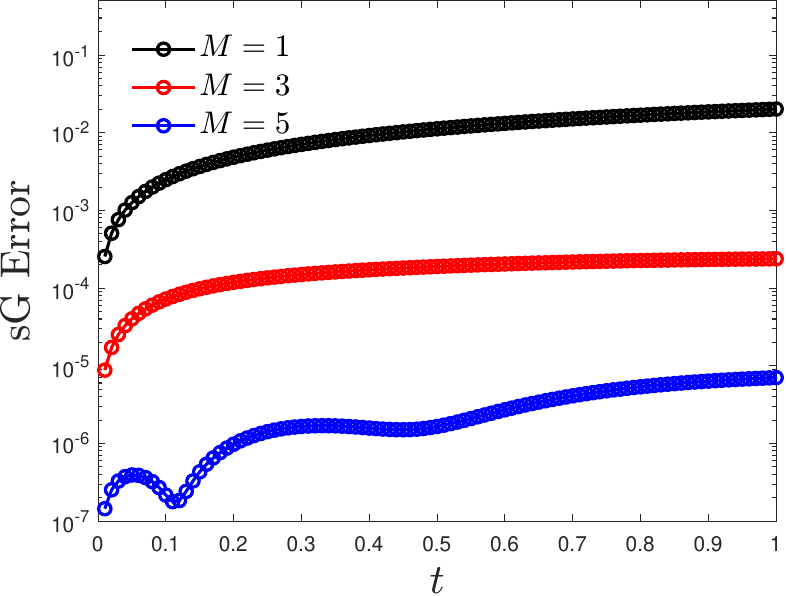}
\caption{\small{\textbf{Test 2}. Left: time evolution of the variance with respect to $\z$ of the fourth moment $\textrm{M4}(t,\z)$, for different values of $M$. Centre: $L^2$ error of $\textrm{M4}(t,\z)$ with respect to a reference solution, at times $t=0.01, 1, 2$, and increasing $M$. Right: time evolution in $[0,1]$ of the same error for fixed values of $M=1,3,5$. In all cases, $N=50^2$ and the reference solution is computed with $M^{\textrm{ref}}=30$. Initial conditions given by \eqref{eq:init_test1} with deterministic temperature $T=1$. The uncertainty is in $\gamma(\z) = - 3 \z$, with $\z\sim\mathcal{U}([0,1])$.}}
\label{fig:test_2}
\end{figure}
\begin{figure}
\centering
\includegraphics[width = 0.3\linewidth]{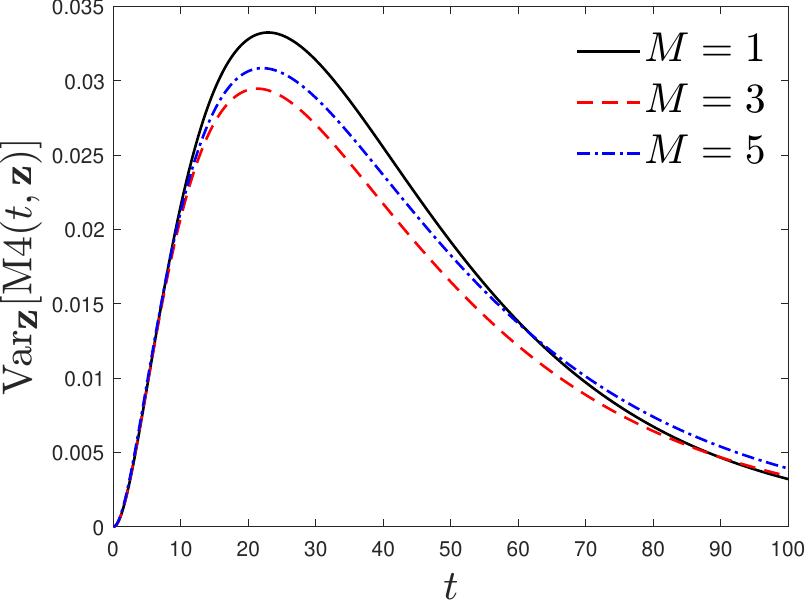}
\includegraphics[width = 0.3\linewidth]{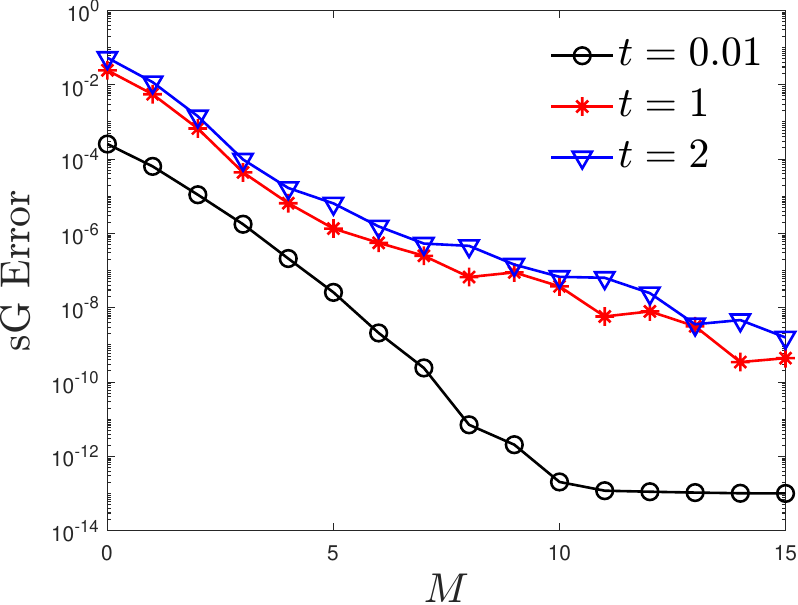}
\includegraphics[width = 0.3\linewidth]{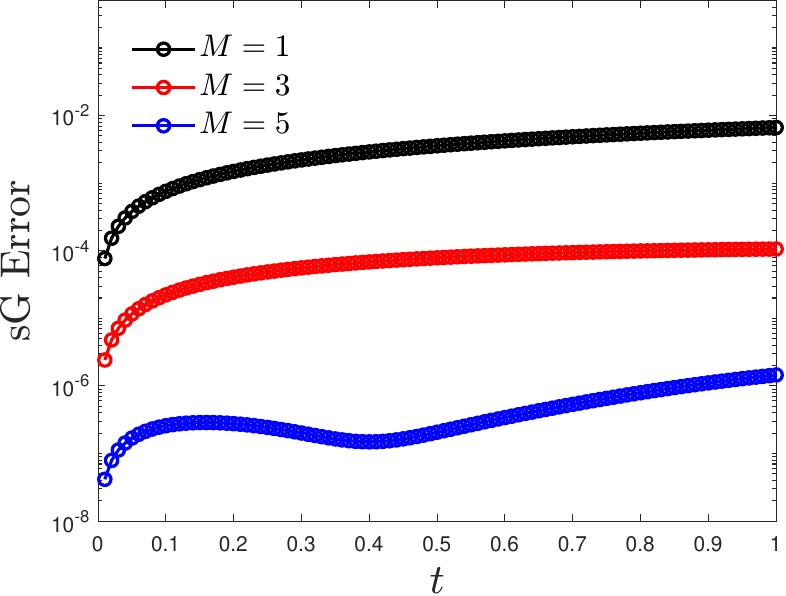}
\caption{\small{\textbf{Test 2}. Left: time evolution of the variance with respect to $\z$ of the fourth moment $\textrm{M4}(t,\z)$, for different values of $M$. Centre: $L^2$ error of $\textrm{M4}(t,\z)$ with respect to a reference solution, at times $t=0.01, 1, 2$, and increasing $M$. Right: time evolution in $[0,1]$ of the same error for fixed values of $M=1,3,5$. In all cases, $N=50^2$ and the reference solution is computed with $M^{\textrm{ref}}=30$. Initial conditions given by \eqref{eq:init_test1} with deterministic temperature $T=1$. The uncertainty is in $\gamma(\z) = - 3 \z$, with $\z\sim\mathrm{Beta}(2,5)$.}}
\label{fig:test_2_beta}
\end{figure}
\begin{figure}
\centering
\includegraphics[width = 0.3\linewidth]{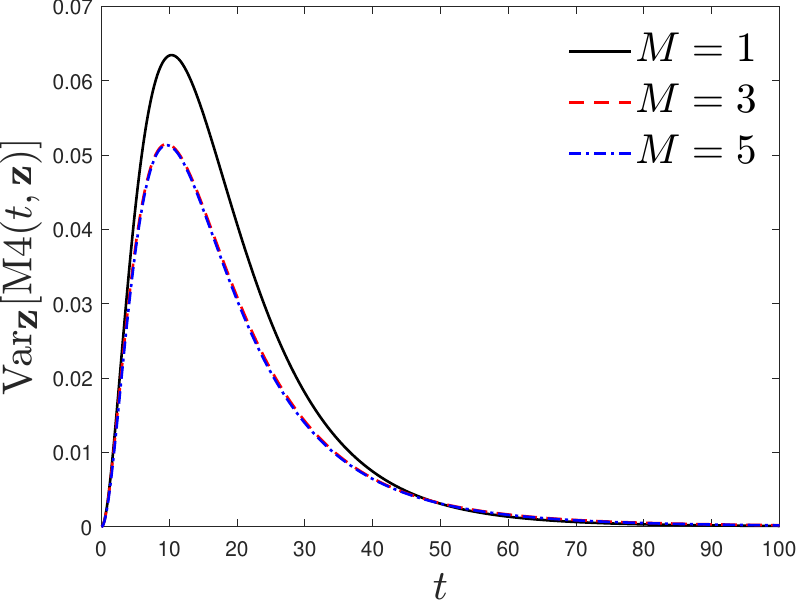}
\includegraphics[width = 0.3\linewidth]{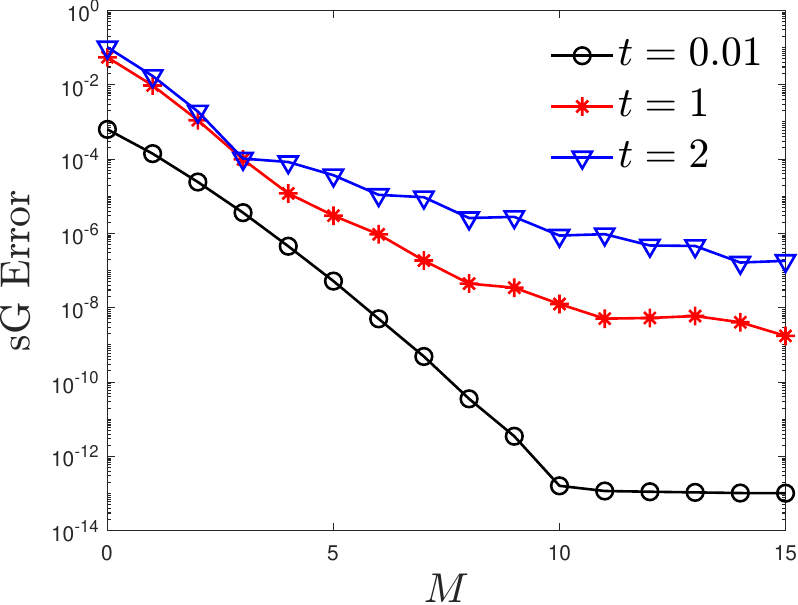}
\includegraphics[width = 0.3\linewidth]{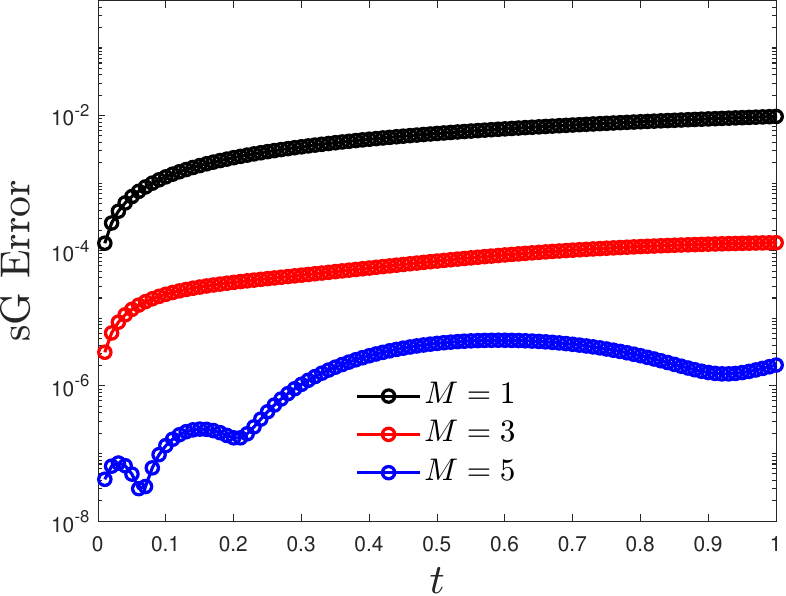}
\caption{\small{\textbf{Test 2}. Left: time evolution of the variance with respect to $\z$ of the fourth moment $\textrm{M4}(t,\z)$, for different values of $M$. Centre: $L^2$ error of $\textrm{M4}(t,\z)$ with respect to a reference solution, at times $t=0.01, 1, 2$, and increasing $M$. Right: time evolution in $[0,1]$ of the same error for fixed values of $M=1,3,5$. In all cases, $N=50^2$ and the reference solution is computed with $M^{\textrm{ref}}=30$. Initial conditions given by \eqref{eq:init_test1} with deterministic temperature $T=1$. The uncertainty is in $\gamma(\z) = - 3 \z$, with $\z\sim\mathrm{Beta}(5,2)$.}}
\label{fig:test_2_beta_v2}
\end{figure}

In this test, we consider a sample of $N=50^2$ particles of the initial conditions \eqref{eq:init_test1} with deterministic temperature $T=1$. The one dimensional uncertainty is now assumed in the exponent $\gamma$:
$
\gamma(\z) = - 3 \z.
$
We observe that, with these choices, the equilibrium distribution is deterministic; namely, the centred Gaussian with unit temperature. However, the dynamic is governed by an uncertain potential, ranging from the Coulombian to the Maxwell case. This is equivalent to considering the uncertainty in a relaxation parameter, as was already done in \cite{Dimarco22_2}, Test 1, case (a). 

In the left panel of Figure \ref{fig:test_2}, we show the variance with respect to uniformly distributed $\z\sim\mathcal{U}([0,1])$ of the time evolution of the fourth moment, for different $M$. In the central plot, we display the \ac{sg} error of $\textrm{M4}(t,\z)$ at times $t=0.01,1,2$, for increasing $M$, with respect to a reference solution computed with $M^{\textrm{ref}}=30$. Finally, in the right panel, we present the time evolution of the same error, for fixed $M=1,3,5$. Figures \ref{fig:test_2_beta}-\ref{fig:test_2_beta_v2} perform the same analysis for the distributions $\z\sim\mathrm{Beta}(2,5)$ and $\z\sim\mathrm{Beta}(5,2)$, respectively.

We note that the variance, which starts at zero, decreases in time after reaching a maximum. This is due to the fact that both the initial condition and the asymptotic equilibrium are deterministic, while the uncertainty is present only in the time evolution. Similarly to \cite{Dimarco22_2,gerritsma2010time}, the accuracy of the \ac{gpc} approximation deteriorates over time. This contrasts with Test 1, which had uncertainty in the initial temperature, and consequently also in the equilibrium state.

\subsection{Test 3: two dimensional uncertain parameter}
This tests combines Test 1 and Test 2. We consider now a two dimensional ($d_\z=2$) uncertain parameter $\z=(z_1,z_2)$ and we suppose that the components are statistically independent in a way that $p(\z)=p(z_1)p(z_2)$. We initialise the particles according to
\begin{equation} \label{eq:init_test3}
f^0(v,z_1) = \frac{1}{\pi T^2(z_1)} |v|^2 e^{-\frac{|v|^2}{T(z_1)}} 
\end{equation}
with uncertain initial temperature 
\begin{equation}
T(z_1) = 1 + \frac{z_1}{5},
\quad\textrm{with}\quad
z_1\sim\mathcal{U}([0,1]).
\end{equation}
Moreover, we assume that the exponent $\gamma$ depends on $z_2$:
$\gamma(z_2) = -3 z_2,$ where $z_2$ follows a uniform or Beta distribution. Under this hypothesis, the \ac{gpc} expansion \eqref{eq:gPC} becomes
\begin{equation}
v_i(t,z_1,z_2) \approx v^{M_1,M_2}_i(t,z_1,z_2) = \sum_{m=0}^{M_1} \sum_{n=0}^{M_2} \hat{v}_{i,m,n}(t) \Psi^{(1)}_m(z_1) \Psi^{(2)}_n(z_2)
\end{equation}
where $\{ \Psi^{(1)}_m(z_1) \}_{m=0}^{M_1}$ and $\{ \Psi^{(2)}_n(z_2) \}_{n=0}^{M_2}$ are the polynomials orthogonal with respect to $p(z_1)$ and $p(z_2)$, respectively.

Figures \ref{fig:test_2dz_unif_unif}-\ref{fig:test_2dz_unif_beta}-\ref{fig:test_2dz_unif_beta_v2} show the spectral error of the fourth moment $\textrm{M4}(t,z_1,z_2)$ with respect to a reference solution computed with $M^{\textrm{ref}}_1=M^{\textrm{ref}}_2=30$, for increasing order $M_1,M_2$, at times $t=0.01,1,2$. We consider $z_2\sim\mathcal{U}([0,1])$ in Figure \ref{fig:test_2dz_unif_unif}, $z_2\sim\mathrm{Beta}(2,5)$ in Figure \ref{fig:test_2dz_unif_beta}, and $z_2\sim\mathrm{Beta}(5,2)$ in Figure \ref{fig:test_2dz_unif_beta_v2}. The number of particles is $N=50^2$. The error is presented in $\log_{10}$ scale in all the figures.

The results are consistent with the previous tests. We note that, as the time increases, the error in $z_2$ (and thus the global error) deteriorates.

\begin{figure}
\centering
\includegraphics[width = 0.3\linewidth]{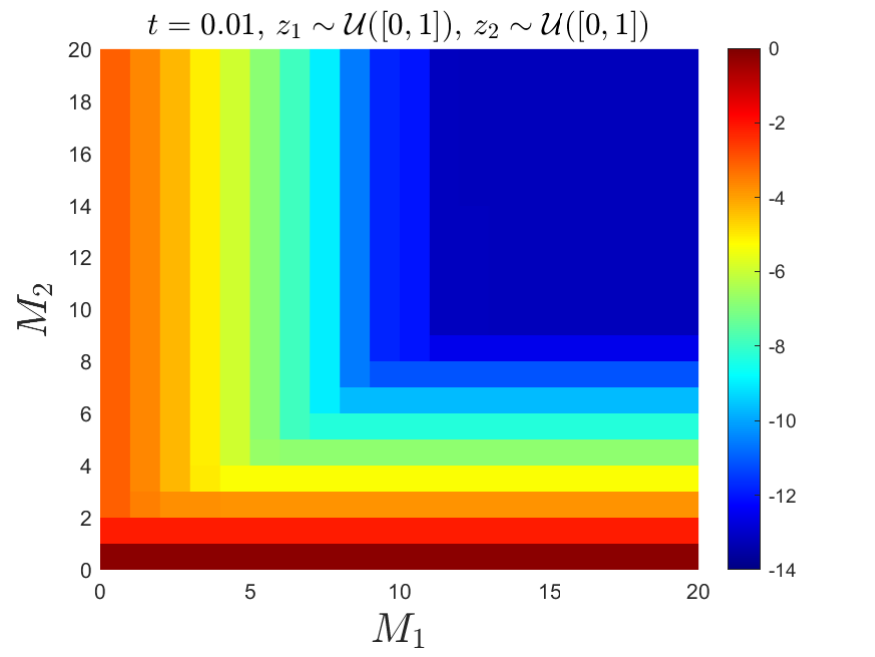}
\includegraphics[width = 0.3\linewidth]{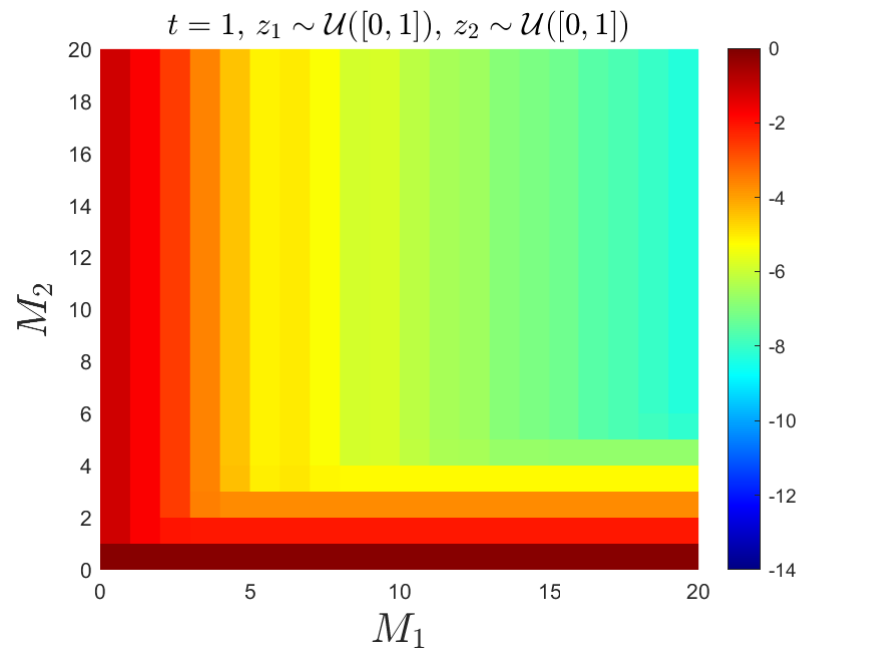}
\includegraphics[width = 0.3\linewidth]{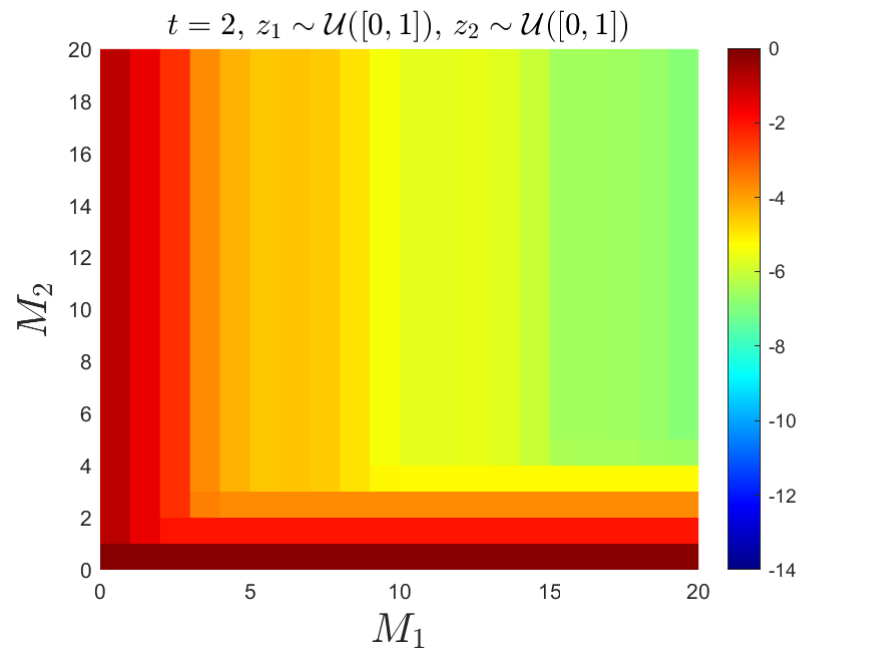}
\caption{\small{\textbf{Test 3}. Convergence of the $L^2$ error of fourth moment $\textrm{M4}(t,z_1,z_2)$ at times $t=0.01$ (left), $t=1$ (centre), $t=2$ (right), for increasing $M_1$ and $M_2$. In all cases, $N=50^2$ and $M^{\textrm{ref}}_1=M^{\textrm{ref}}_2=30$. Initial conditions given by \eqref{eq:init_test3} with $\gamma(z_2) = -3 z_2$, and $z_2\sim\mathcal{U}([0,1])$. The error is presented in $\log_{10}$ scale.}}
\label{fig:test_2dz_unif_unif}
\end{figure}
\begin{figure}
\centering
\includegraphics[width = 0.3\linewidth]{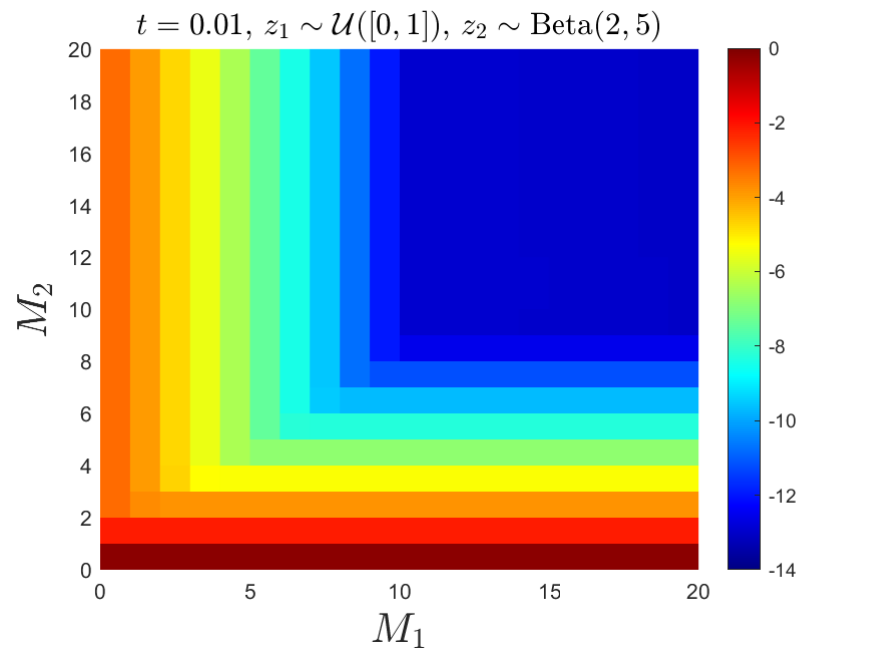}
\includegraphics[width = 0.3\linewidth]{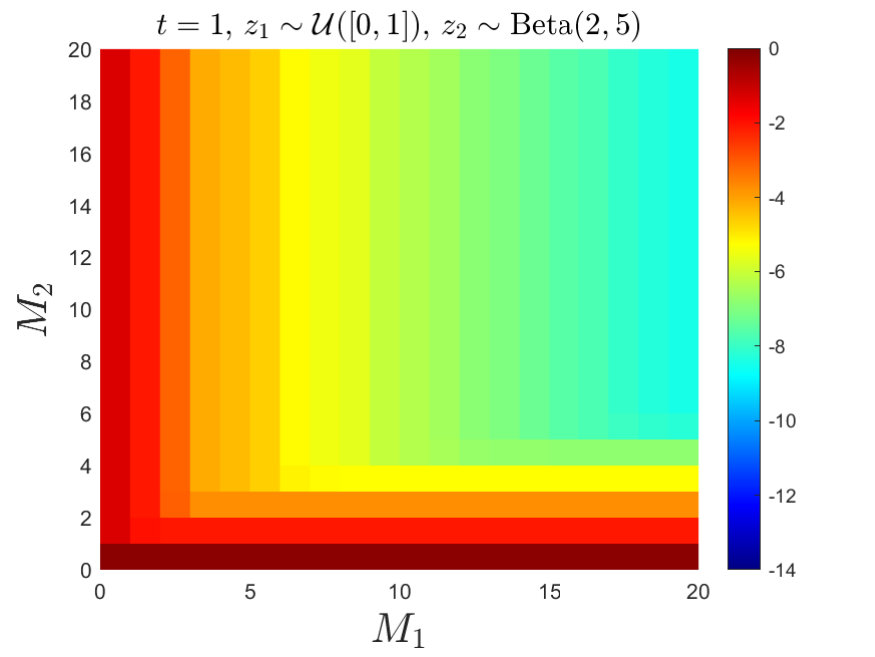}
\includegraphics[width = 0.3\linewidth]{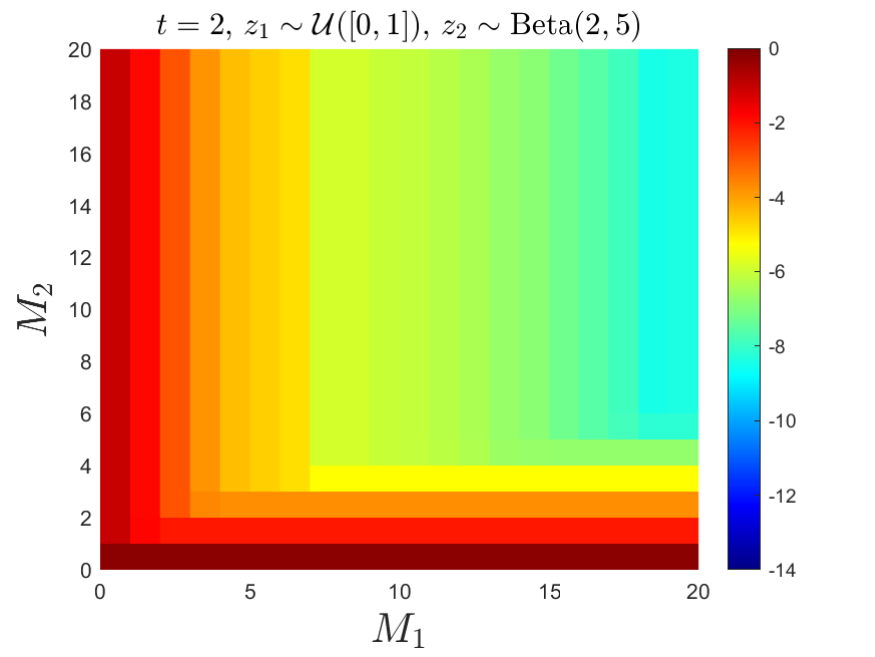}
\caption{\small{\textbf{Test 3}. Convergence of the $L^2$ error of the fourth moment $\textrm{M4}(t,z_1,z_2)$ at times $t=0.01$ (left), $t=1$ (centre), $t=2$ (right), for increasing $M_1$ and $M_2$. In all cases, $N=50^2$ and $M^{\textrm{ref}}_1=M^{\textrm{ref}}_2=30$. Initial conditions given by \eqref{eq:init_test3} with $\gamma(z_2) = -3 z_2$, and $z_2\sim\mathrm{Beta}(2,5)$. The error is presented in $\log_{10}$ scale.}}
\label{fig:test_2dz_unif_beta}
\end{figure}
\begin{figure}
\centering
\includegraphics[width = 0.3\linewidth]{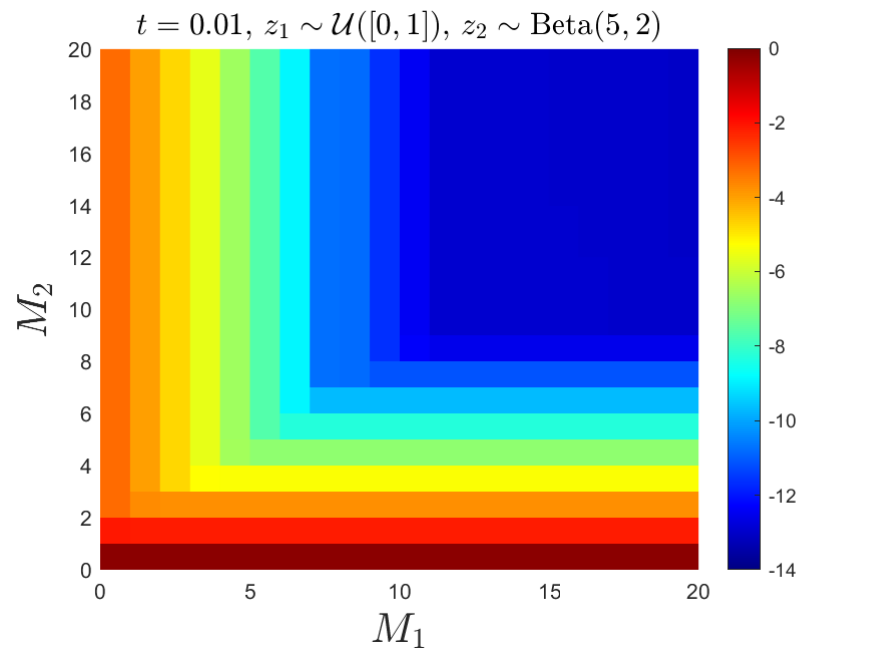}
\includegraphics[width = 0.3\linewidth]{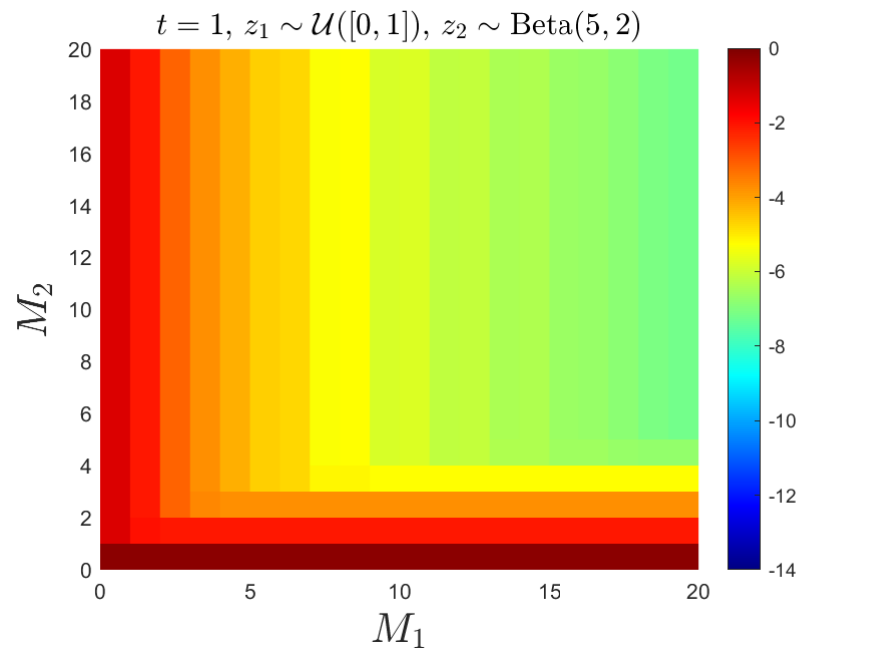}
\includegraphics[width = 0.3\linewidth]{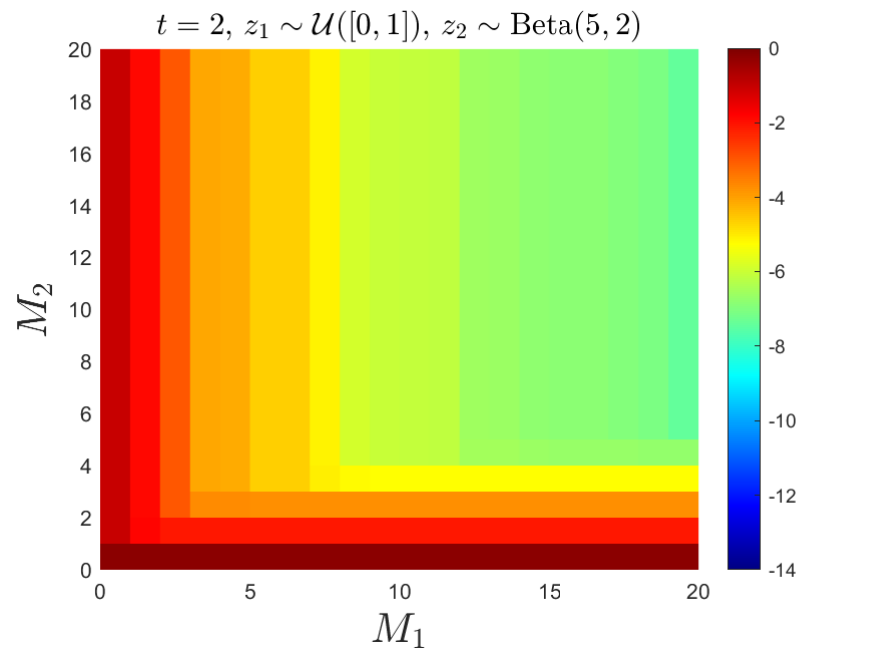}
\caption{\small{\textbf{Test 3}. Convergence of the $L^2$ error of the fourth moment $\textrm{M4}(t,z_1,z_2)$ at times $t=0.01$ (left), $t=1$ (centre), $t=2$ (right), for increasing $M_1$ and $M_2$. In all cases, $N=50^2$ and $M^{\textrm{ref}}_1=M^{\textrm{ref}}_2=30$. Initial conditions given by \eqref{eq:init_test3} with $\gamma(z_2) = -3 z_2$, and $z_2\sim\mathrm{Beta}(5,2)$. The error is presented in $\log_{10}$ scale.}}
\label{fig:test_2dz_unif_beta_v2}
\end{figure}

\subsection{Test 4: BKW solution}
\begin{figure}
\centering
\includegraphics[width = 0.3\linewidth]{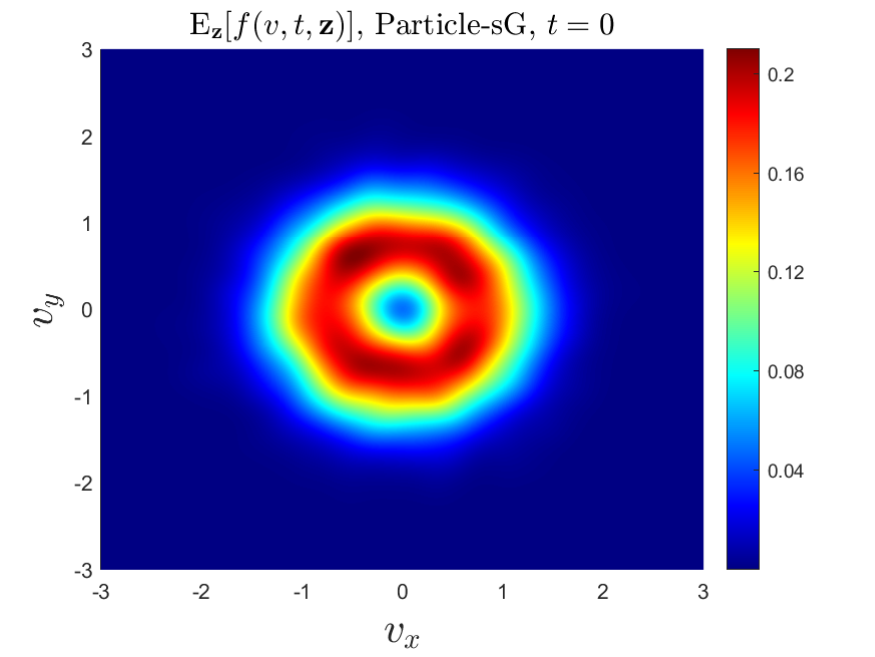}
\includegraphics[width = 0.3\linewidth]{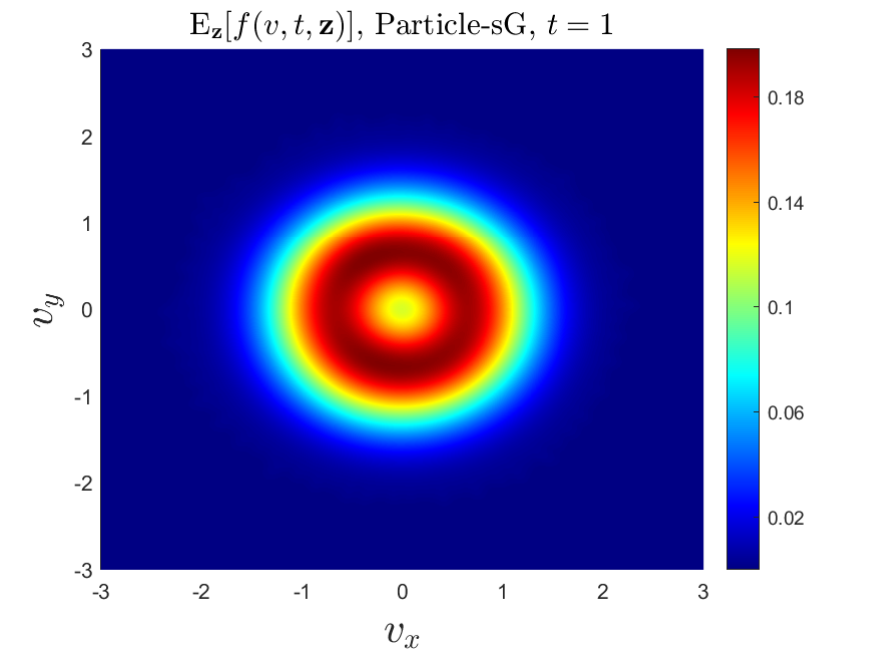}
\includegraphics[width = 0.3\linewidth]{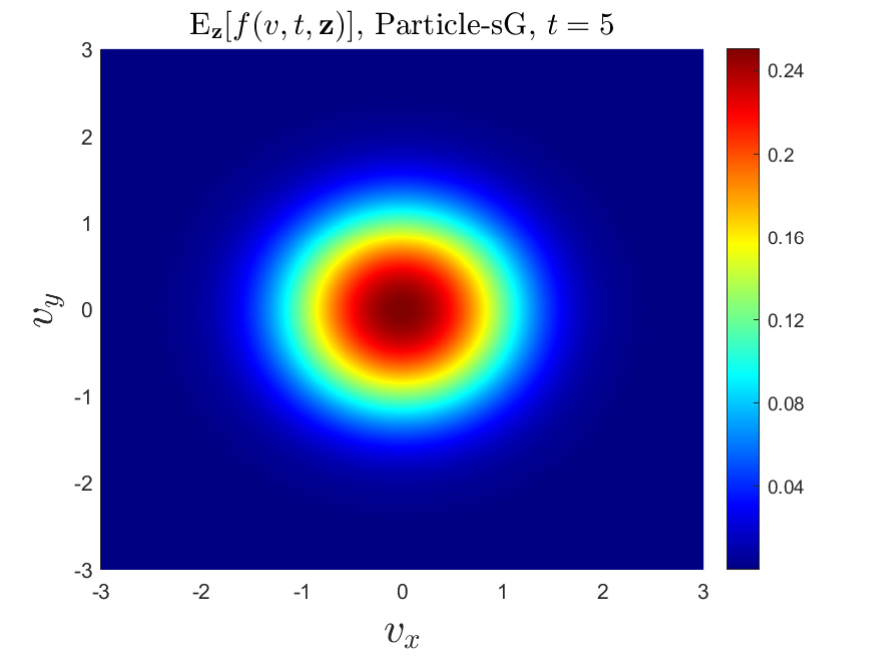}
\includegraphics[width = 0.3\linewidth]{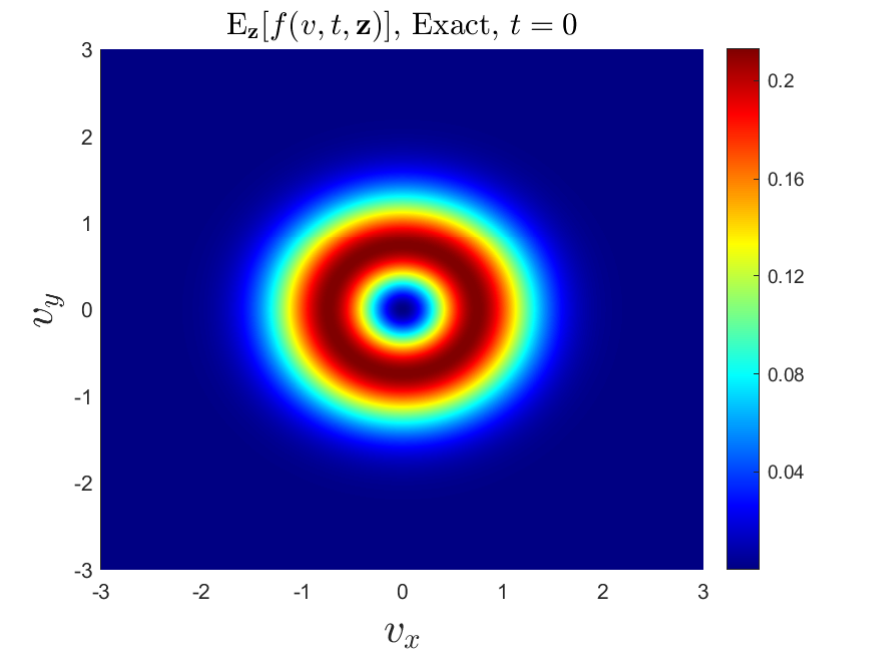}
\includegraphics[width = 0.3\linewidth]{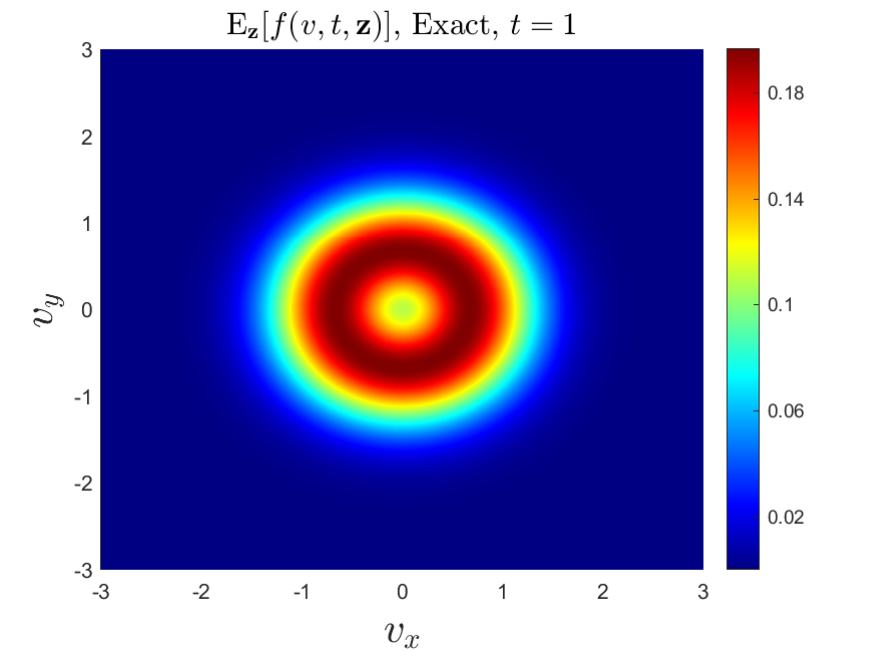}
\includegraphics[width = 0.3\linewidth]{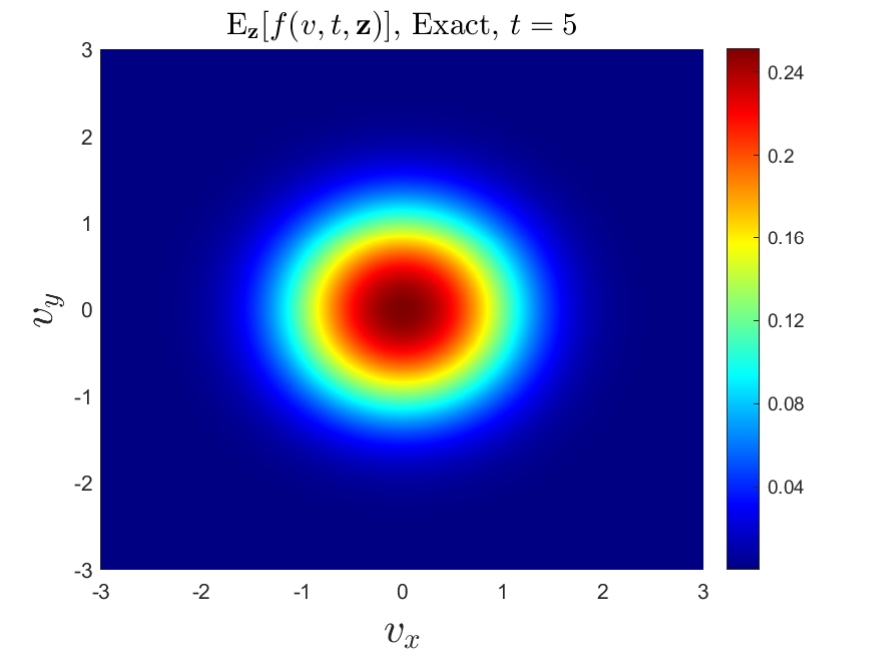}
\caption{\small{\textbf{Test 4}. Expected distributions $\mathbb{E}_{\z}[f(v,t,\z)]$ at times $t=0,1,5$ for the \ac{bkw} test. Upper row: \ac{sg} particle solution obtained with $N=120^2$ and $M=3$. Lower row: exact \ac{bkw} solution \eqref{eq:BKW}.}}
\label{fig:test_3_exp}
\end{figure}
\begin{figure}
\centering
\includegraphics[width = 0.3\linewidth]{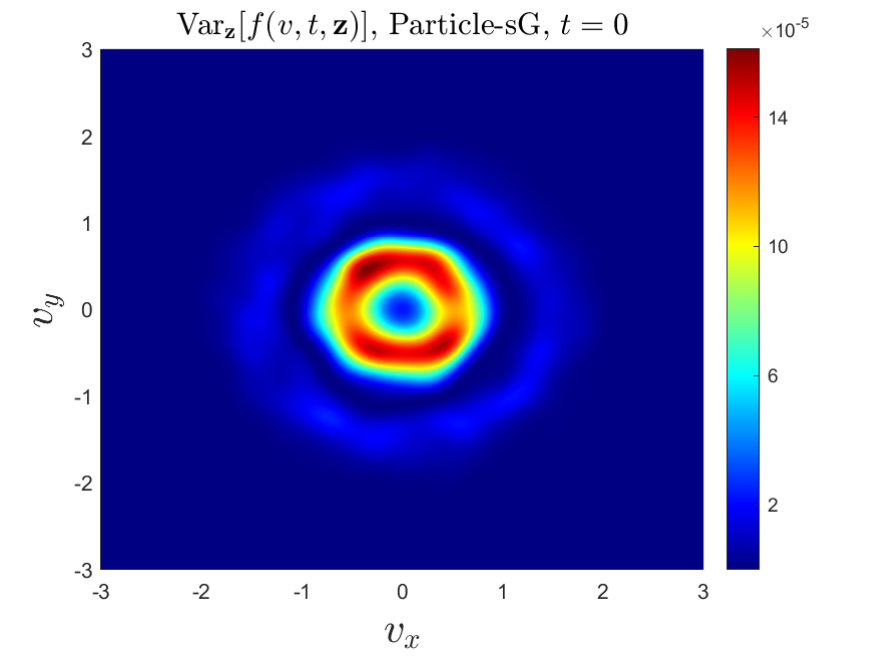}
\includegraphics[width = 0.3\linewidth]{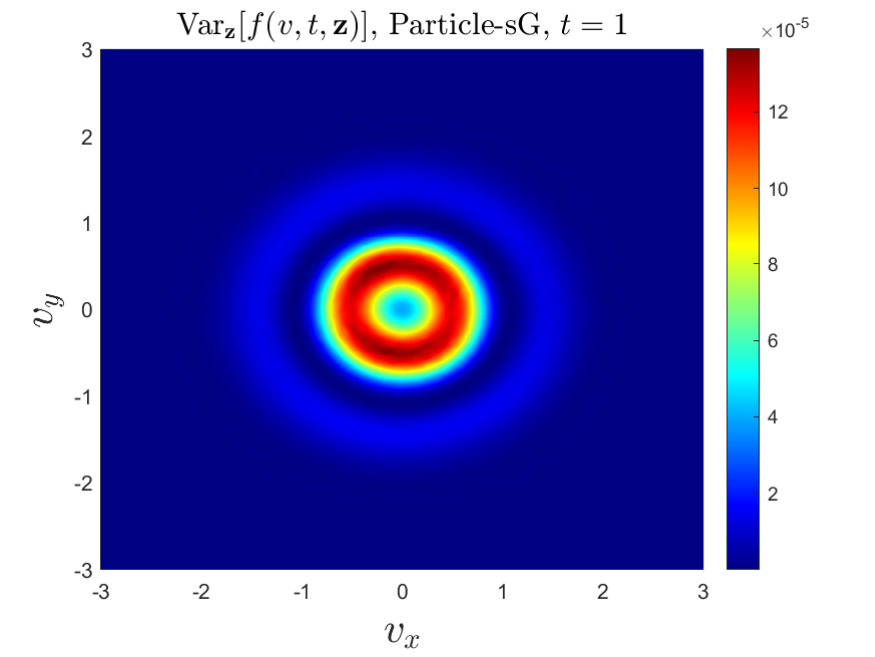}
\includegraphics[width = 0.3\linewidth]{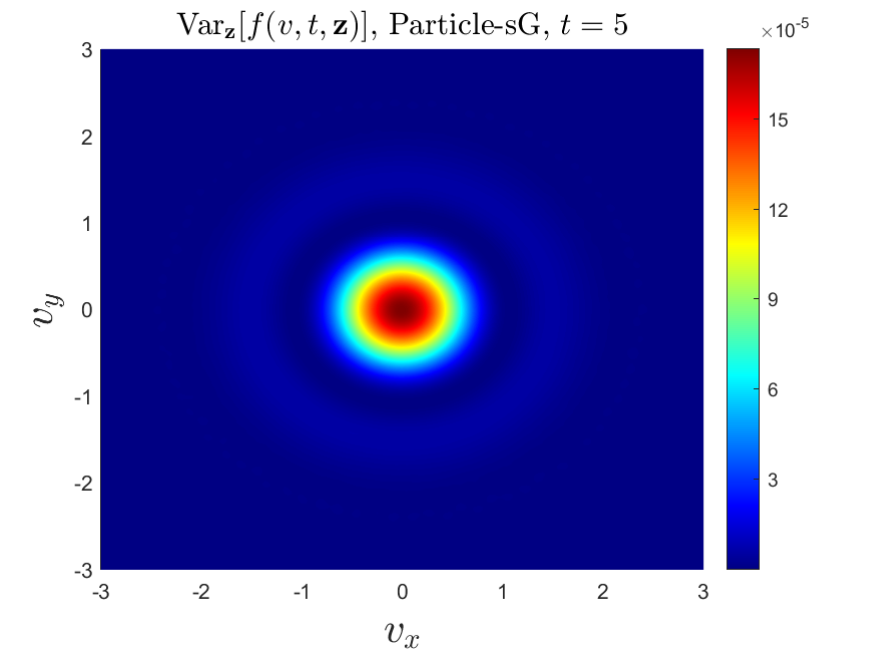}
\includegraphics[width = 0.3\linewidth]{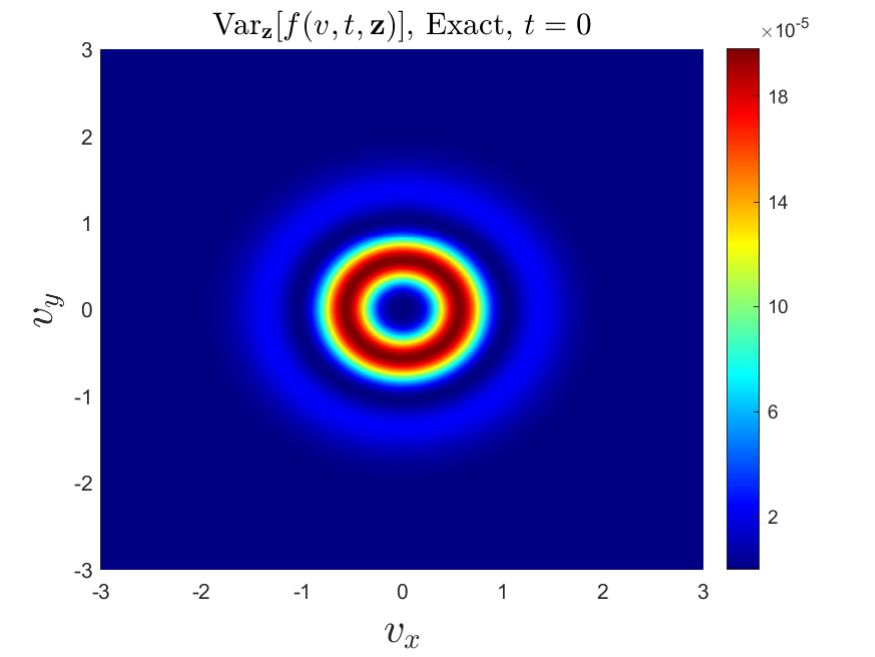}
\includegraphics[width = 0.3\linewidth]{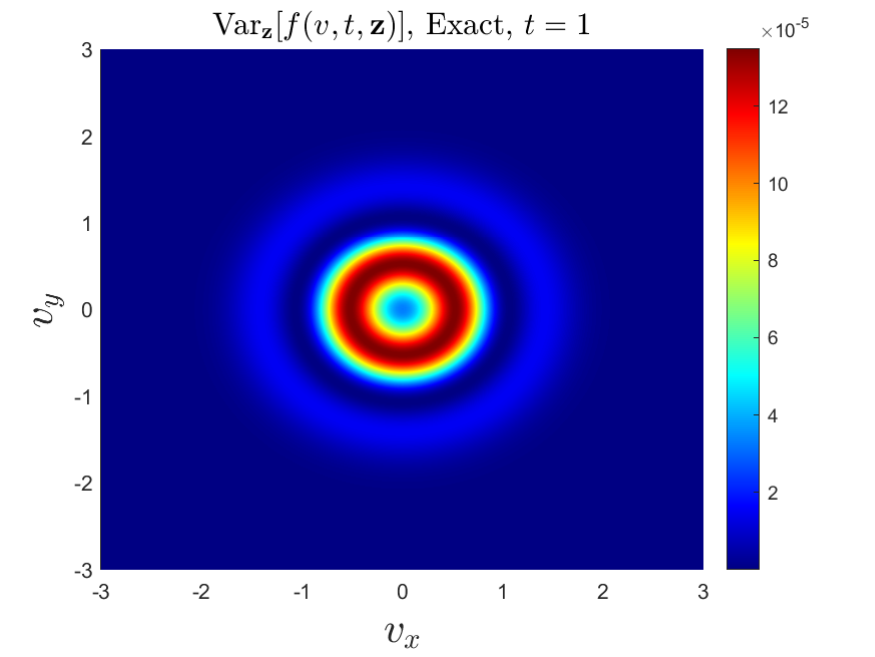}
\includegraphics[width = 0.3\linewidth]{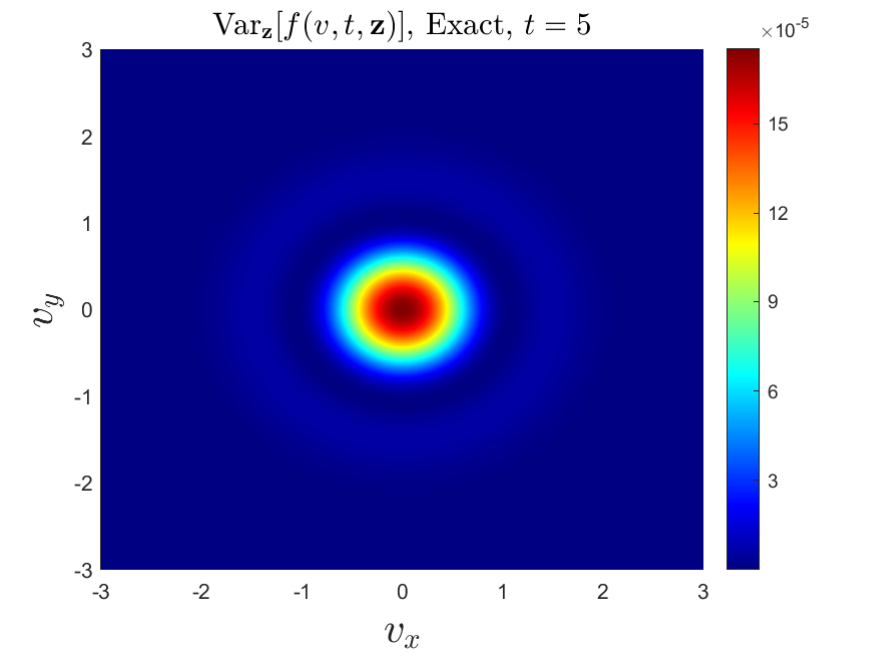}
\caption{\small{\textbf{Test 4}. Variance of the distributions $\textrm{Var}_{\z}[f(v,t,\z)]$ at times $t=0,1,5$ for the \ac{bkw} test. Upper row: \ac{sg} particle solution obtained with $N=120^2$ and $M=3$. Lower row: exact \ac{bkw} solution \eqref{eq:BKW}.}}
\label{fig:test_3_var}
\end{figure}
\begin{figure}
\centering
\includegraphics[width = 0.3\linewidth]{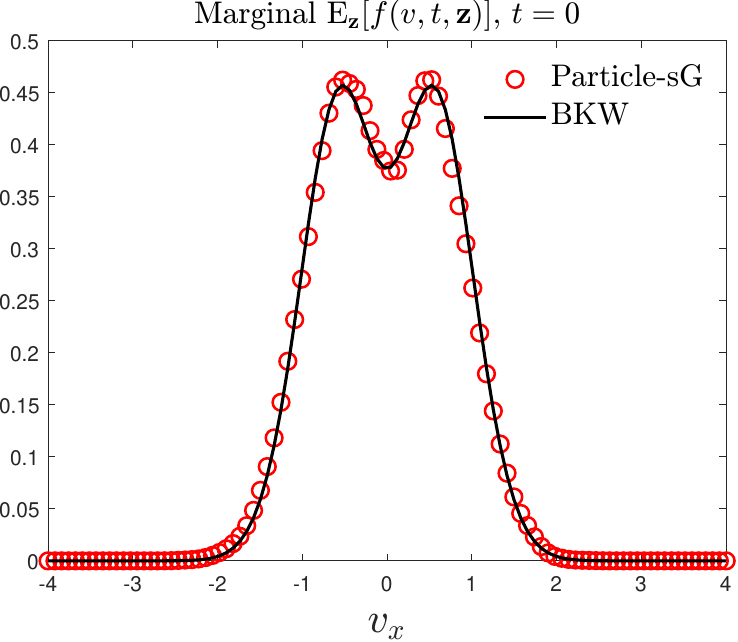}
\includegraphics[width = 0.3\linewidth]{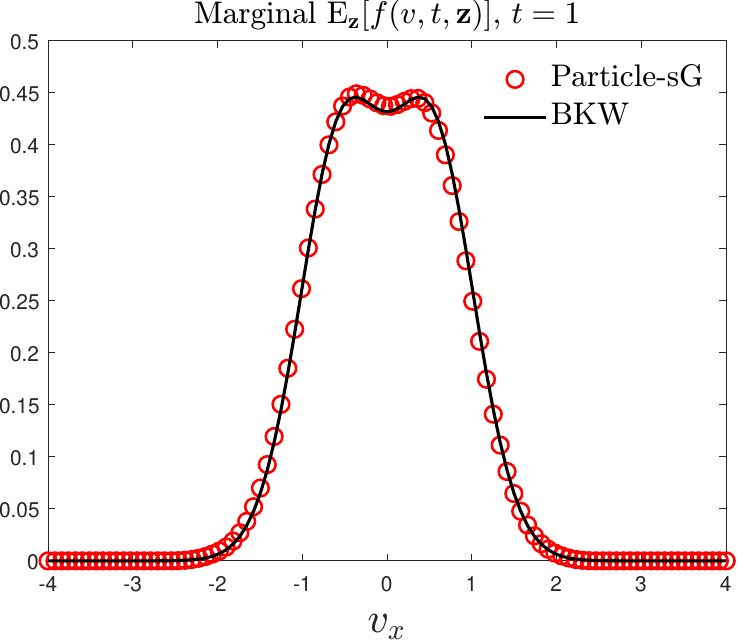}
\includegraphics[width = 0.3\linewidth]{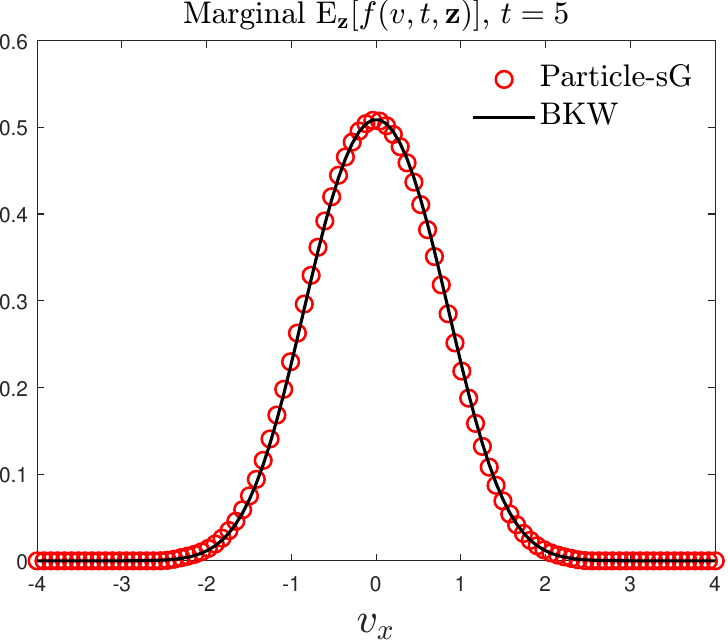}
\includegraphics[width = 0.3\linewidth]{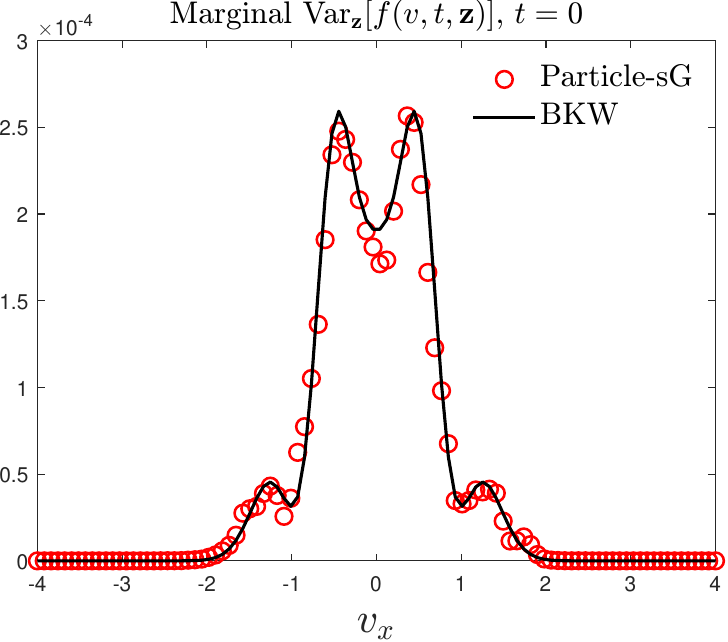}
\includegraphics[width = 0.3\linewidth]{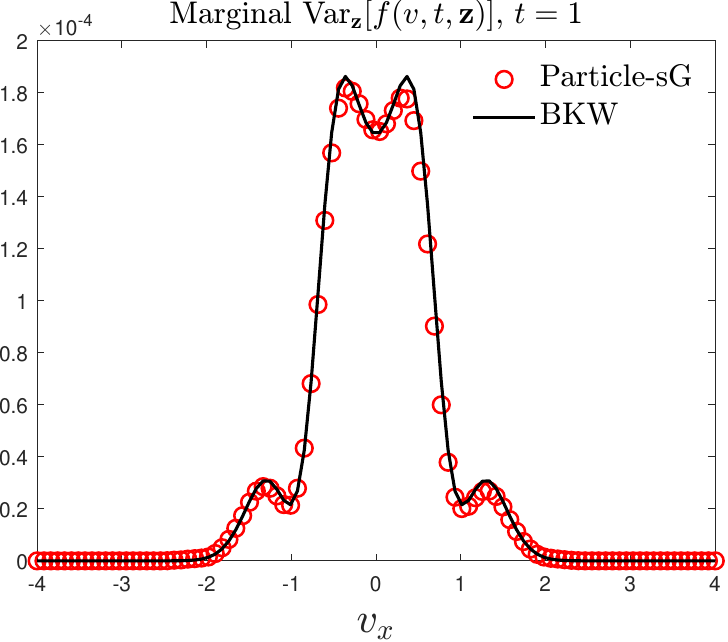}
\includegraphics[width = 0.3\linewidth]{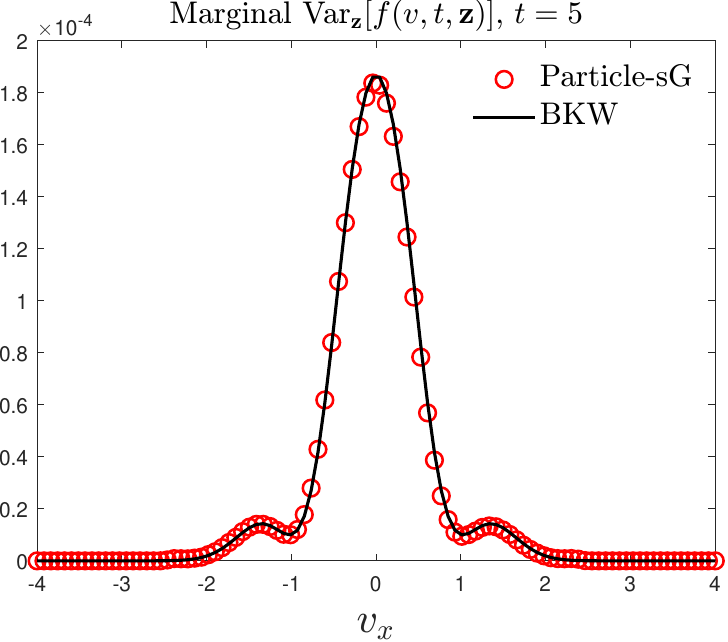}
\caption{\small{\textbf{Test 4}. Marginals of $\mathbb{E}_{\z}[f(v,t,\z)]$ and $\textrm{Var}_{\z}[f(v,t,\z)]$ at times $t=0,1,5$, of the \ac{sg} particle approximation and the exact \ac{bkw} solution \eqref{eq:BKW}. Numerical solution obtained with $N=120^2$ and $M=3$.}}
\label{fig:test_3_marginal}
\end{figure}
In this test we consider the model with Maxwellian molecules, i.e., $\gamma=0$, with $d_\z=1$ and uncertain initial temperature. In this scenario, a benchmark is given by the \acf{bkw} solution (see, for instance, Appendix A of the works \cite{Jingwei2020, Pareschi2020})
\begin{equation} \label{eq:BKW}
f(v,t,\z) = \frac{1}{2\pi K(t,\z)} e^{-\dfrac{|v|^2}{2K(t,\z)}} \left( \frac{2K(t,\z)-T(\z)}{K(t,\z)} + \frac{T(\z)-K(t,\z)}{2K^2(t,\z)} |v|^2 \right),
\end{equation}
where $K$ is given by
\begin{equation}
K(t,\z) = T(\z) \left(1 - \frac{1}{2} e^{-t/8}\right).
\end{equation}
We initialise the \ac{sg} particle scheme by sampling $N$ particles from \eqref{eq:BKW} at $t=0$ and uncertain initial temperature:
\begin{equation}
T(\z) = 0.5 + 0.1\z,
\quad\textrm{with}\quad
\z\sim\mathcal{U}([0,1]).
\end{equation}
We fix $M=3$ as the order of the \ac{gpc} expansion, and we compare the numerical solution with the exact solution. We choose $N=120^2$ for Figures \ref{fig:test_3_exp}-\ref{fig:test_3_var}-\ref{fig:test_3_marginal}, and $N=60^2,80^2,100^2,120^2$ for Figure \ref{fig:test_3_L2err}. 

Figures \ref{fig:test_3_exp}-\ref{fig:test_3_var}-\ref{fig:test_3_marginal} show the comparison between the exact \ac{bkw} solution \eqref{eq:BKW} and the \ac{sg} particle approximation. In the first two, we report at times $t=0,1,5$ the expectation $\mathbb{E}_{\z}[f(v,t,\z)]$ and the variance $\textrm{Var}_{\z}[f(v,t,\z)]$ of the distributions. In the third one, we compare directly the marginals of the expectation and the variance of the distributions. The \ac{sg} particle method matches the exact \ac{bkw} solution very well.

\begin{figure}
\centering
\includegraphics[width = 0.5\linewidth]{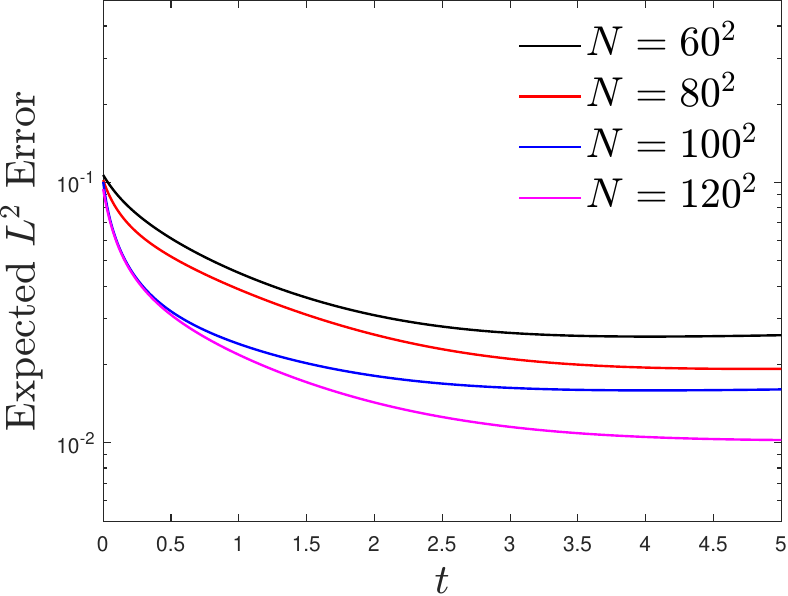}
\caption{\small{\textbf{Test 4}. Expectation in $\z$ of the relative $L^2$ error with respect to the exact \ac{bkw} solution, as a function of the time, for different number of particles $N=60^2,80^2,100^2,120^2$. In all the tests, we choose $M=3$.}}
\label{fig:test_3_L2err}
\end{figure}

In Figure \ref{fig:test_3_L2err}, the expectation in $\z$ of the relative $L^2$ error with respect to the exact \ac{bkw} solution \eqref{eq:BKW} is displayed:
\begin{equation}
L^2\, \textrm{Error} = \left( \int_{\R^{2}} \frac{|f^{\textrm{BKW}}(v,t,\z)-f^{\textrm{num}}(v,t,\z)|^2}{|f^{\textrm{BKW}}(v,t,\z)|^2} \,\mathrm{d}v \right)^{1/2}.
\end{equation}
The error decreases as the number of particles increases, and it also decreases monotonically in time. 

\subsection{Test 5: Trubnikov's formula}
We study here the Trubnikov's formula, which describes the relaxation towards equilibrium of anisotropic initial temperatures in the Maxwellian and Coulombian scenarios. The particles are initialised as
\begin{equation} \label{eq:init_trubnikov}
f^0(v,\z) = \frac{1}{2\pi} \frac{1}{\sqrt{T_x(\z) T_y}} \exp\left\{-\dfrac{v^2_x}{2 T^0_x(\z)}\right\} \exp\left\{-\dfrac{v^2_y}{2 T^0_y}\right\}
\end{equation}
with uncertain temperature $T^0_x(\z) > T^0_y$. Trubnikov's formula (see Appendix \ref{sec:appendix} for further details) states
\begin{equation}
\Delta T(t, \z) \simeq \Delta T(0, \z) e^{-t/\tau(\z)},
\end{equation}
when $\Delta T(0, \z)$ is sufficiently small. The relaxation rate $\tau(\z)$ depends on the potential considered:
\begin{equation}
\tau(\z) = 
\begin{cases}
\dfrac{1}{8 C \rho} & \textrm{Maxwell case}\\
& \\
\dfrac{4 T^{3/2}(\z)}{C\rho\sqrt{\pi}} & \textrm{Coulomb case}.
\end{cases}
\end{equation}

In the Maxwell case ($\gamma=0$) we choose 
\begin{align} \label{eq:init_T_M}
&T^0_x(\z) = 0.7 + 0.1 \z,
\quad\textrm{with}\quad
\z\sim\mathcal{U}([0,1]), \\
&T^0_y = 0.5,
\end{align}
with $M=3$, and $C=1/2$. We investigate the trend to equilibrium with $N=30^2, 120^2$ particles. The left panel of Figure \ref{fig:test_5_trub} shows the evolution of the anisotropy of the temperature in time; as the number of particles increases, the \ac{sg} particle method recovers Trubnikov's behaviour.

In the Coulomb case ($\gamma=-3$) we fix the uncertain total temperature
\begin{equation}
T(\z) = 0.6 + 0.1\frac{\z}{2},
\quad\textrm{with}\quad
\z\sim\mathcal{U}([0,1]),
\end{equation}
which is conserved in time. We vary the temperatures along the y-axis in order to study different values of $\Delta T(0,\z)$. In particular, we fix $N=120^2$, $M=3$, $C=2$, and we consider two scenarios:
\begin{align} \label{eq:init_T_C}
(a) \quad & T^0_y = 0.3,
\quad\textrm{so that}\quad
T^0_x(\z) = 0.9 + 0.1\z,
\quad\textrm{with}\quad
\z\sim\mathcal{U}([0,1]); \\
(b) \quad & T^0_y = 0.5,
\quad\textrm{so that}\quad
T^0_x(\z) = 0.7 + 0.1\z,
\quad\textrm{with}\quad
\z\sim\mathcal{U}([0,1]). 
\end{align}
The right panel of Figure \ref{fig:test_5_trub} shows the evolution of the anisotropy of the temperature in time. The anisotropy tends to zero in time and, as expected, the agreement with Trubnikov's formula is better when the initial temperature difference is smaller.

\begin{figure}
\centering
\includegraphics[width = 0.45\linewidth]{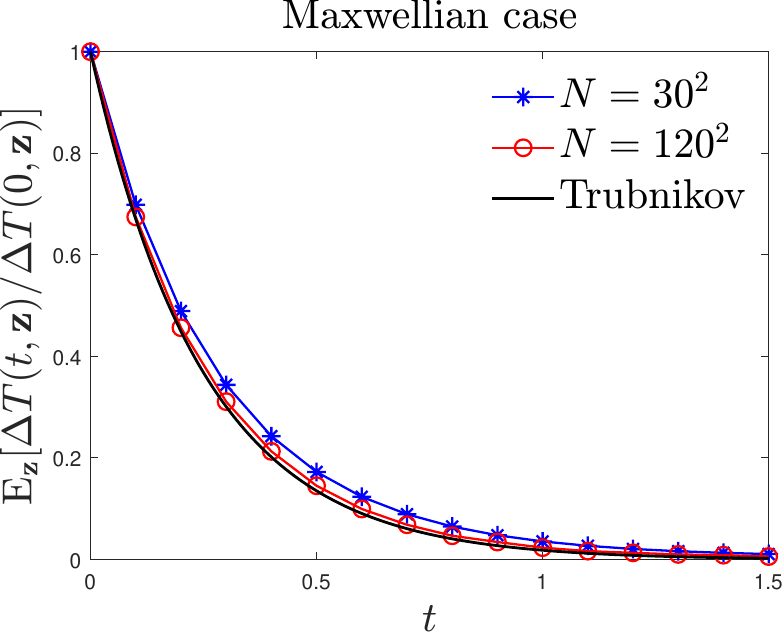}
\includegraphics[width = 0.45\linewidth]{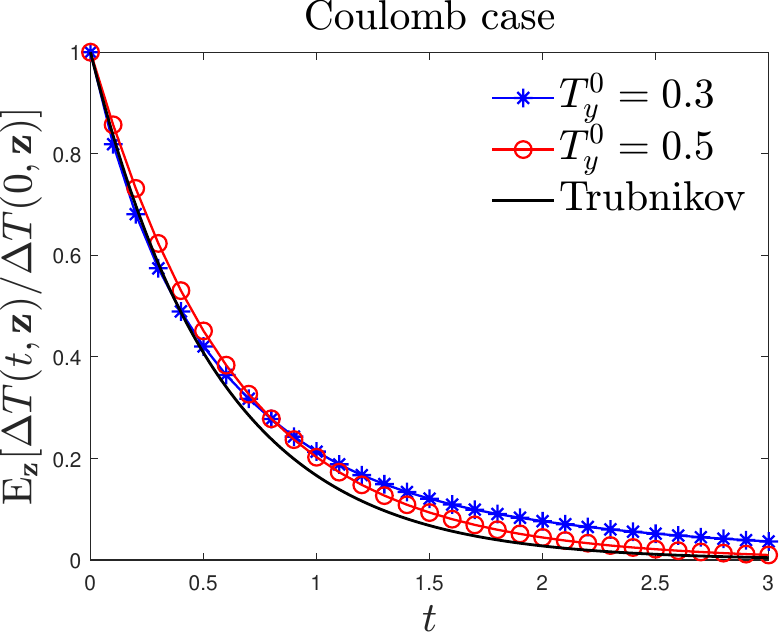}
\caption{\small{\textbf{Test 5}. Left: expectation of $\Delta T(t,\z)/\Delta T(0,\z)$ for Maxwellian molecules, i.e., $\gamma=0$. Initial conditions given by \eqref{eq:init_trubnikov} with initial temperatures \eqref{eq:init_T_M}. We choose $M=3$, $C=1/2$ and we investigate $N=30^2,120^2$. Trubnikov relaxation rate $\tau_{\textrm{M}}=\frac{1}{8 C \rho}$. Right: expectation of $\Delta T(t,\z)/\Delta T(0,\z)$ for Coulomb potential, i.e., $\gamma=-3$. Initial conditions given by \eqref{eq:init_trubnikov} with initial temperatures \eqref{eq:init_T_C}. We choose $M=3$, $C=2$, and $N=120^2$. Trubnikov relaxation rate $\tau_{\textrm{C}}(\z)=\frac{4 T^{3/2}(\z)}{C\rho\sqrt{\pi}}$.}}
\label{fig:test_5_trub}
\end{figure}

\subsection{Test 6: Gaussians on a triangle}

To conclude, we study the trend to equilibrium of the Coulomb model ($\gamma=-3$) from an initial condition consisting of the sum of three Gaussian with uncertain temperature ($d_\z=1$). 

We consider the initial datum
\begin{equation} \label{eq:init_triangle}
f^0(v,\z) = \frac{1}{3}\left( \mathcal{M}_{\rho,U_1,\bar{T}(\z)} + \mathcal{M}_{\rho,U_2,\bar{T}(\z)} + \mathcal{M}_{\rho,U_3,\bar{T}(\z)} \right) ,
\end{equation}
where $\bar{T}(\z)$ is the uncertain common variance of each Gaussian, and $U_1$, $U_2$, $U_3$ are the vertices of an equilateral triangle in velocity space which is inscribed in a circle of radius $d_r$ centred at the origin:
\begin{equation} \label{eq:init_tr_c}
U_1 = \left(0,\,d_r\right),
\quad
U_2 = \left(-\frac{d_r\sqrt{3}}{2},\, -\frac{d_r}{2}\right),
\quad
U_3 = \left(\frac{d_r\sqrt{3}}{2},\, -\frac{d_r}{2} \right).
\end{equation}

The total temperature of the system is $T(\z) = \bar{T}(\z) + d_r^2/2$, since
\begin{align}
T(\z) & = \frac{1}{2} \int_{\R^2} |v|^2 f^0(v,\z) \,\mathrm{d}v = \frac{1}{6} \int_{\R^2} |v|^2 \left( \mathcal{M}_{\rho,U_1,\bar{T}(\z)} + \mathcal{M}_{\rho,U_2,\bar{T}(\z)} + \mathcal{M}_{\rho,U_3,\bar{T}(\z)} \right) \,\mathrm{d}v \\
& = \frac{1}{2} \int_{\R^2} |v|^2 \mathcal{M}_{\rho,U_1,\bar{T}(\z)} \,\mathrm{d}v
= \bar{T}(\z) + d_r^2 - \frac{d_r^2}{2} = \bar{T}(\z) + \frac{d_r^2}{2}.
\end{align}
The asymptotic equilibrium is therefore the centred Maxwellian with mass $\rho$ and temperature $T(\z)$.

\begin{figure}
\centering
\includegraphics[width = 0.3\linewidth]{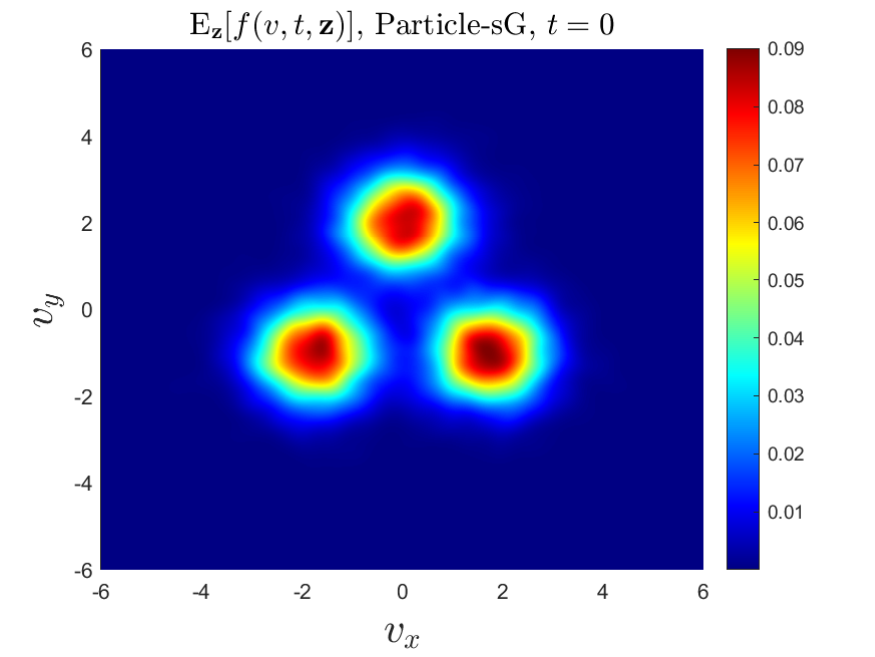}
\includegraphics[width = 0.3\linewidth]{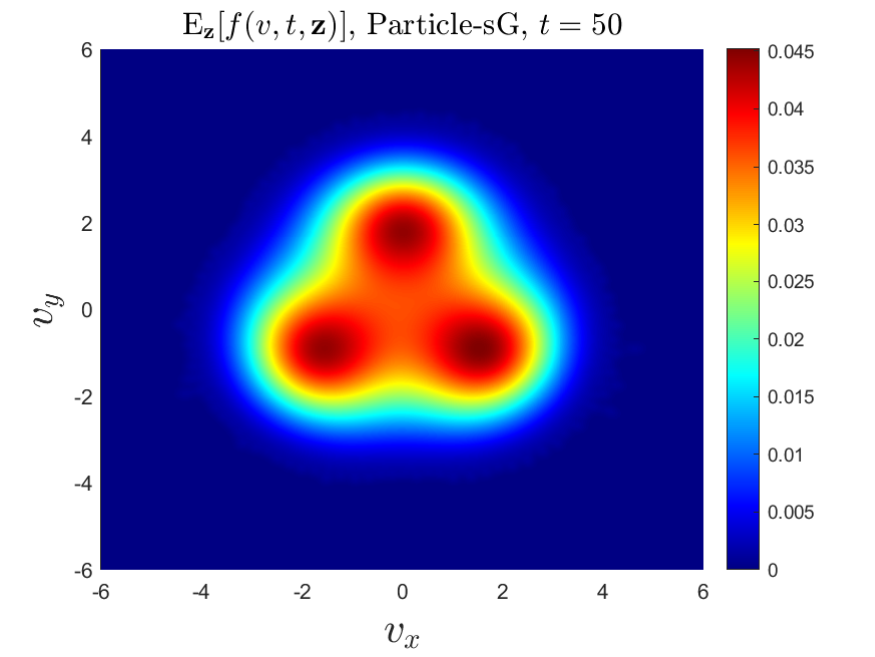}
\includegraphics[width = 0.3\linewidth]{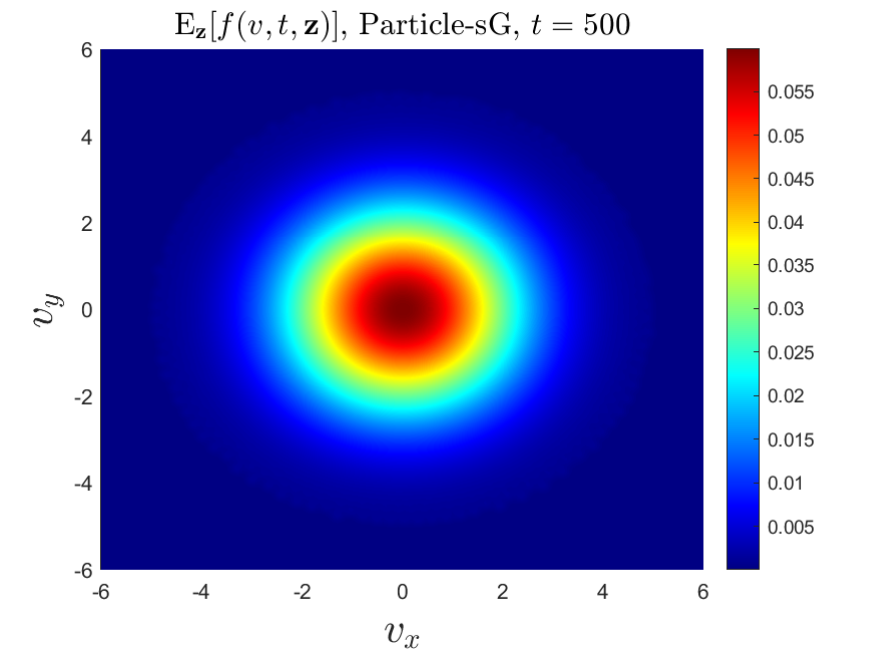}
\includegraphics[width = 0.3\linewidth]{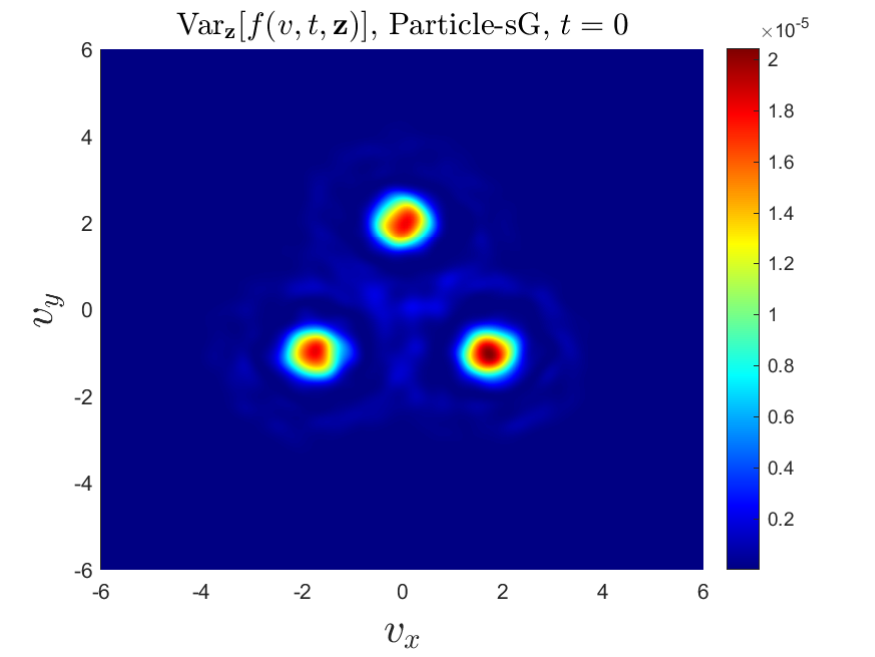}
\includegraphics[width = 0.3\linewidth]{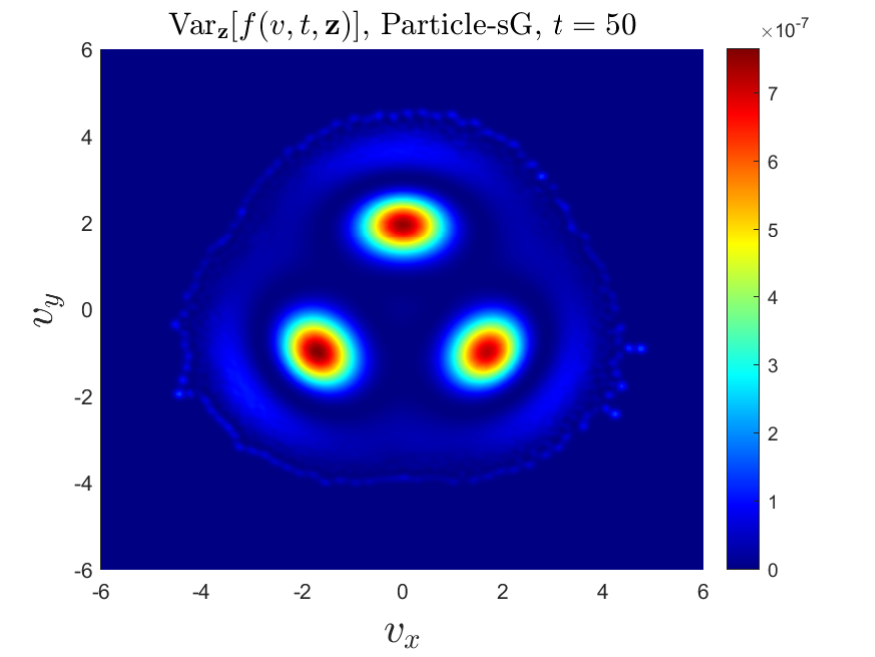}
\includegraphics[width = 0.3\linewidth]{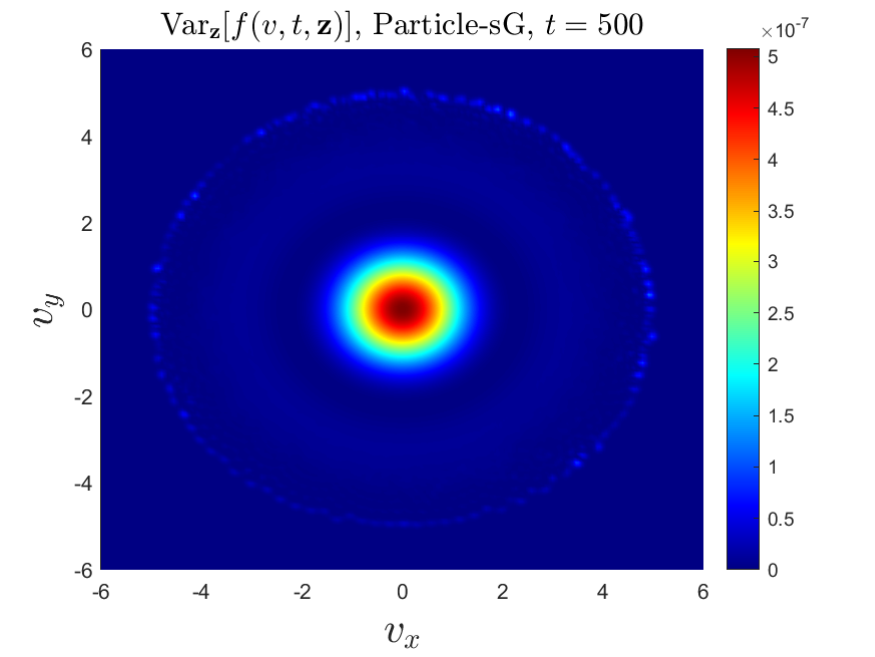}
\caption{\small{\textbf{Test 6}. Upper row: expected distributions $\mathbb{E}_{\z}[f(v,t,\z)]$ at times $t=0,50,500$ for the Gaussians on a triangle test. Lower row: variance of the distributions $\textrm{Var}_{\z}[f(v,t,\z)]$ at times $t=0,50,500$ for the same test. Numerical solution obtained with $N=120^2$ and $M=3$. Initial data given by \eqref{eq:init_triangle} with \eqref{eq:init_tr_c}, and $d_r=2$.}}
\label{fig:test_4}
\end{figure}

We choose $N=120^2$ particles and $M=3$. We fix $d_r=2$ and 
\begin{equation}
T(\z) = 0.5 + 0.1\z,
\quad\textrm{with}\quad
\z\sim\mathcal{U}([0,1]).
\end{equation}
Figure \ref{fig:test_4} shows the expectation $\mathbb{E}_{\z}[f(v,t,\z)]$ and the variance $\textrm{Var}_{\z}[f(v,t,\z)]$ of the solution at times $t=0,50,500$. The system reaches the correct equilibrium, the centred Gaussian with temperature $T(\z)$, and both the expectation and the variance are asymptotically radially symmetric.

\section*{Conclusions}

We have designed a \ac{sg} formulation of a deterministic particle method for the spatially homogeneous Landau equation with uncertainty. We have proven the structural properties of the \ac{sg} particle method (conservation of the invariant quantities and dissipation of the entropy), which hold pointwise in the uncertainty variable, and thus also in expectation. We have also shown the regularity of the method in the random space. We have validated the method in a range of numerical tests, involving uncertainty in the initial data, the interaction exponent $\gamma$, or both. We have demonstrated the spectral convergence of the method in suitable scenarios. We have shown the method's ability to capture known behaviours, such as the \ac{bkw} solution, and Trubnikov's formula. Future research directions will deal with the detailed analysis of the asymptotic propagation of regularity for the homogeneous model together with the design of a particle solver for the inhomogeneous Landau-Fokker-Planck equation in the presence of uncertain quantities.

\section*{Acknowledgments}
The work has been written within the activities of GNFM group of INdAM (National Institute of High Mathematics). 
RB and JAC were supported by the Advanced Grant Nonlocal-CPD (Nonlocal PDEs for Complex Particle Dynamics: Phase Transitions, Patterns and Synchronization) of the European Research Council Executive Agency (ERC) under the European Union’s Horizon 2020 research and innovation programme (grant agreement No.~883363) and by the EPSRC grant EP/T022132/1 ``Spectral element methods for fractional differential equations, with applications in applied analysis and medical imaging".
A.M. acknowledges partial support of the MIUR-PRIN2020 project No.2020JLWP23 and INdAM-GNFM project No. CUP E53C22001930001. 
M.Z. acknowledges partial support of the MIUR-PRIN2020 project No.2020JLWP23.

\appendix

\section{Trubnikov's formula for the Landau equation} \label{sec:appendix}

We derive Trubnikov's formula for the relaxation of anisotropic initial temperatures in dimension two ($d=2$), both for Maxwellian and Coulombian molecules. We follow the original contribution by Trubnikov \cite{Trubnikov1965} (Section 20, pages 200--203) and the derivation for Maxwellian molecules presented in \cite{medaglia2023} (Appendix A).

We consider the space homogeneous version of equation \eqref{eq:LFP} written in flux form,
\begin{equation}
\frac{\partial f(v,t)}{\partial t} = \nabla_v \cdot J(f)(v,t),
\end{equation}
with initial conditions given by
\begin{equation}
f_0(v)=\frac{\rho}{2\pi}\frac{1}{\sqrt{T_x T_y}} \exp\left\{-\dfrac{v^2_x}{2T_x}\right\} \exp\left\{-\dfrac{v^2_y}{2T_y}\right\},
\end{equation}
where $T_x >T_y $. Obviously $T=(T_x+T_y)/2$ is constant in time, so that
\begin{equation}
\frac{\mathrm{d}}{\mathrm{d}t} T_x = - \frac{\mathrm{d}}{\mathrm{d}t} T_y = - \frac{1}{\rho} \int_{\R^2} v^2_y \frac{\partial f}{\partial t} \,\mathrm{d}v = \frac{2}{\rho} \int_{\R^2} v_y J_y \,\mathrm{d}v,
\end{equation}
where the $y$-component of the flux $J_y$ reads
\begin{equation}
J_y = - C \int_{\R^2} \sum_{j\in\{x,y\}} |q|^{\gamma}(|q|^2 \delta_{yj} - q_y q_j)\left(f(v)\partial_{v_{*j}} f(v_*) - f(v_*) \partial_{v_{j}} f(v) \right) \,\mathrm{d}v_*,
\end{equation} 
for arbitrary $C>0$. Now, we exploit the knowledge of the initial distribution $f_0(v)$ to write explicitly the term inside the integral
\begin{equation} \label{eq:dtperp}
\frac{\mathrm{d}}{\mathrm{d}t} T_x = - \frac{\mathrm{d}}{\mathrm{d}t} T_y = - \frac{2C}{\rho} \frac{T_x - T_y}{T_x T_y} \underbrace{\int_{\R^2} \int_{\R^2} |q|^{\gamma}v_y q_y q^2_x f(v) f(v_*) \,\mathrm{d}v_* \,\mathrm{d}v}_{I}.
\end{equation}
To compute the integral $I$ in \eqref{eq:dtperp}, we perform the change of variables $(v,v_*)\to(V,q)$, where $V=(v+v_*)/2$ and $q=v-v_*$. In the general case, we also suppose that the temperature difference is small, i.e., $|T_x-T_y|\ll1$, so that $T_x\approx T$ and $T_y\approx T$
\begin{equation}
I = \int_{\R^2} \int_{\R^2} |q|^{\gamma}v_y q_y q^2_x f(V) f(q) \,\mathrm{d}V \,\mathrm{d}q 
\end{equation}
with
\begin{equation}
f(V) = \frac{\rho}{\pi T} \exp\left\{\dfrac{V^2}{T}\right\}, \qquad \textrm{and} \qquad f(q) = \frac{\rho}{4\pi T} \exp\left\{\dfrac{q^2}{4T}\right\}.
\end{equation}
As already observed in \cite{medaglia2023}, this condition can be relaxed in the Maxwell case, since we can compute $I$ independently from the magnitude of $|T_x-T_y|$.

Now we consider separately the Maxwell case ($\gamma=0$) and the Coulomb case ($\gamma=-3$). In the first scenario, we have
\begin{equation}
I_{\textrm{M}} = \int_{\R^2} \int_{\R^2} v_y q_y q^2_x f(V) f(q) \,\mathrm{d}V \,\mathrm{d}q = 2 \rho^2 T^2,
\end{equation}
in the second one
\begin{equation}
I_{\textrm{C}} = \int_{\R^2} \int_{\R^2} \frac{v_y q_y q^2_x}{|q|^3} f(V) f(q) \,\mathrm{d}V \,\mathrm{d}q = \frac{\rho^2\sqrt{\pi T}}{16}.
\end{equation}
Returning to \eqref{eq:dtperp}, and observing that
\begin{equation}
\frac{\mathrm{d}}{\mathrm{d}t} \Delta t =
\frac{\mathrm{d}}{\mathrm{d}t} (T_x - T_y) =
2\frac{\mathrm{d}}{\mathrm{d}t} T_x,
\end{equation}
we obtain $\Delta T(t) = \Delta T(0) e^{-t/\tau}$, where the relaxation parameter is $\tau_{\textrm{M}} = \frac{1}{8 C \rho}$ for Maxwellian molecules, and $\tau_{\textrm{C}} = \frac{4 T^{3/2}}{C\rho\sqrt{\pi}}$ for the Coulomb case. Consistently with the results for the three dimensional case, the relaxation parameter is only independent of the temperature for Maxwellian particles.

\bibliographystyle{abbrv}
\bibliography{Landau_SGDP.bib}

\end{document}